\def\R{\mathbb{R}}
\def\N{\mathbb{N}}
\newcommand{\ben}{\begin{enumerate}}
\newcommand{\bit}{\begin{itemize}}
\newcommand{\een}{\end{enumerate}}
\newcommand{\eit}{\end{itemize}}
\newcommand{\ed}{\end{document}}
\newcommand{\cal}{\mathcal}
\def\cU{\mathcal{U}}
\def\cS{\mathcal{S}}
\def\cC{\mathcal{C}}
\def\cW{\mathcal{W}}
\def\cS{\mathcal{S}}
\let\hat=\widehat
\let\tilde=\widetilde
\let\landa=\lambda
\let\parc=\partial
\def\ep{\varepsilon}
\def\landa{\lambda}
\def\flecha{\rightarrow}
\def\esiz{\langle}
\def\esde{\rangle}
\def\s{\hbox{\bb S}}
\newcommand{\fl}{\longrightarrow}
\def\r{\mathbb{R}}
\def\S{\Sigma}
\def\cte.{\mathop{\rm cte.}\nolimits}
\def\N{\mathbb{N}}
\def\R{\mathbb{R}}
\def\S{\mathbb{S}}
\newfont{\bb}{msbm10 at 12pt}
\def\s{\hbox{\bb S}}
\def\n{\mathbb{N}}
\def\r{\mathbb{R}}
\titleformat{\subsection}[runin]
{\bfseries} {\thesubsection{.}}{0.15cm}{}[.]
\titleformat{\subsubsection}[runin]
{\em}{\thesubsubsection{.}}{0.15cm}{}[.]
\newtheorem{theorem}{Theorem}[section]
\newtheorem{lemma}[theorem]{Lemma}
\newtheorem{proposition}[theorem]{Proposition}
\newtheorem{remark}[theorem]{Remark}
\newtheorem{corollary}[theorem]{Corollary}
\newtheorem{assertion}[theorem]{Assertion}
\theoremstyle{definition}
\numberwithin{equation}{section}
\numberwithin{figure}{section}
\begin{document}
\fancyhead[LO]{Constant anisotropic mean curvature surfaces}
\fancyhead[RE]{José A. Gálvez, Pablo Mira, Marcos P. Tassi}
\fancyhead[RO,LE]{\thepage}

\thispagestyle{empty}

\begin{center}
{\bf \LARGE Complete surfaces of constant anisotropic\\[0.2cm] mean curvature}
\vspace*{5mm}

\hspace{0.2cm} {\Large José A. Gálvez, Pablo Mira, Marcos P. Tassi}
\end{center}

\footnote[0]{
\noindent \emph{Mathematics Subject Classification}: 53A10, 53C42. \\ \mbox{} \hspace{0.25cm} \emph{Keywords}: Constant anisotropic mean curvature, Wulff shape, classification theorems, multigraph.}



\vspace*{7mm}

\begin{quote}
{\small
\noindent {\bf Abstract}\hspace*{0.1cm}
We study the geometry of complete immersed surfaces in $\R^3$ with constant anisotropic mean curvature (CAMC). Assuming that the anisotropic functional is uniformly elliptic, we prove that: (1) planes and CAMC cylinders are the only complete surfaces with CAMC whose Gauss map image is contained in a closed hemisphere of $\S^2$; (2) Any complete surface with non-zero CAMC and whose Gaussian curvature does not change sign is either a CAMC cylinder or the Wulff shape, up to a homothety of $\R^3$; and (3) if the Wulff shape $\cW$ of the anisotropic functional is invariant with respect to three linearly independent reflections in $\R^3$, then any properly embedded surface of non-zero CAMC, finite topology and at most one end is homothetic to $\cW$.



\vspace*{0.1cm}

}
\end{quote}


\section{Introduction}
Let $F:\S^2\flecha\R$ be a smooth positive function on the unit $2$-sphere. Then, $F$ defines the following functional on the space of immersed oriented surfaces in $\R^3$:
\begin{equation}\label{11}
{\mathcal F}(\Sigma)=\int_{\Sigma}F(N)\,d\Sigma,
\end{equation}
where $N:\Sigma\fl\s^2$ is the unit normal of $\Sigma$ and $d\Sigma$ denotes its area element. When $F=1$, \eqref{11} is the area functional. The Euler-Lagrange equation associated to \eqref{11} is uniformly elliptic when $F$ satisfies the convexity condition 
\begin{equation}\label{concon}
\nabla^2 F + F\, \esiz,\esde_{\S^2} >0,
\end{equation}
where $\nabla^2 F$ is the intrinsic Hessian of $F$ in $\S^2$, $\esiz,\esde_{\S^2}$ is the Riemannian metric of $\S^2$, and $>0$ means that the symmetric bilinear form given in \eqref{concon} is positive definite.

\emph{The ellipticity condition \eqref{concon} will be assumed from now on}. It is equivalent to the fact that the map 
$\eta:\s^2\fl\r^3$ given by 
\begin{equation}\label{eta}
\eta(p)=\nabla F(p)+F(p)p
\end{equation}
is a diffeomorphism onto a smooth, compact, strictly convex sphere $\cW\subset \R^3$; here $\nabla F$ denotes the gradient of $F$ in $\S^2$. The ovaloid $\cW$ is called the \emph{Wulff shape} associated to $F$. The exterior unit normal of $\cW$ is given by $\eta^{-1}:\cW\flecha \S^2$. If $F=1$, then $\cW$ is the unit sphere of $\R^3$. 

The critical points of \eqref{11}, with or without a volume constraint, have been deeply studied; they admit a geometric characterization that we explain next. 

For any immersed oriented surface $\Sigma$ in $\R^3$ with Gauss map $N$, we can define the \emph{anisotropic Gauss map} of $\Sigma$ as the map 
\begin{equation}\label{def:agm}
\nu:\Sigma\fl{\mathcal W},\quad \nu=\eta\circ N,
\end{equation}
that sends each $p\in\Sigma$ to the unique point $\nu(p)\in{\mathcal W}$ with its same oriented tangent plane. 
Then, given $p\in \Sigma$, the \emph{anisotropic mean curvature} $H$ of $\Sigma$ at $p$ is the trace of the endomorphism $A_p:=-d\nu_p$. When $F=1$, the anisotropic mean curvature of $\Sigma$ is twice its usual mean curvature (since we are using the trace, and not one half of it). 

The Wulff shape $\cW$ has constant anisotropic mean curvature equal to $-2$ with respect to its exterior unit normal. The anisotropic mean curvature of $\cW$ with respect to its interior unit normal is not, in general, constant. Planes have vanishing anisotropic mean curvature, for any orientation. 

With these definitions in mind, the following geometric equivalence holds: the anisotropic minimal surfaces (i.e. the immersed surfaces in $\R^3$ with vanishing anisotropic mean curvature) are exactly the critical points of the functional \eqref{11}. Analogously, the surfaces with constant anisotropic mean curvature (CAMC) $H_0\neq 0$ are the critical points of \eqref{11} under a fixed volume constraint; equivalently, they are the critical points of 
\begin{equation}\label{2}
{\mathcal F}_0(\Sigma)=\int_{\Sigma}\left(F(N)+\frac{1}{3}\,H_0\langle\psi,N\rangle\right)\,d\Sigma,
\end{equation}
where $\psi$ denotes a parametrization of $\Sigma$ with Gauss map $N$.

The class of CAMC surfaces has been widely studied, specially from the viewpoint of measure theory and convex analysis. The case of anisotropic minimal surfaces has also received some classical contributions from a more geometric viewpoint, see e.g. \cite{Finn,J,JS1,JS2}. The geometry of surfaces with non-zero CAMC has been recently studied in more detail in many works; see, e.g., \cite{Clarenz1,Clarenz2,GM,CCC,He2,He5,He4,KP,KP3,Koiso3,KoisoPalmer,Kuhns,Lira,Palmer} and references therein.

Some of these previous works have provided a good understanding of the basic geometry of compact (without boundary) CAMC surfaces. For instance, there exist anisotropic extensions of the classical theorems of CMC surface theory by Barbosa-do Carmo \cite{BC}, Alexandrov \cite{A} and Hopf \cite{Ho0,Ho}, that classify, respectively, the compact surfaces with CAMC that are stable (Palmer, \cite{Palmer}), embedded (He-Li-Ma-Ge, \cite{CCC}) or have genus zero (Koiso-Palmer, \cite{KoisoPalmer}; see also Gálvez-Mira \cite{GM3}).

In contrast, the global geometry of \emph{complete} immersed CAMC surfaces is quite less understood, and many classical theorems of CMC surface theory still do not have an anisotropic analogue. Our objective in this paper is to give an extension to the anisotropic setting of three of these classical theorems, namely (see \cite{Hoffman2,Klotz,Me}):

{\bf Theorem A (Hoffman-Osserman-Schoen)} \emph{Planes and cylinders are the only complete surfaces with constant mean curvature in $\R^3$ whose Gauss map image lies in a closed hemisphere of $\S^2$}.

\vspace{0.2cm}

{\bf Theorem B (Klotz-Osserman).}  \emph{Spheres and cylinders are the only complete surfaces with non-zero constant mean curvature in $\R^3$ whose Gaussian curvature does not change sign}.

\vspace{0.2cm}

{\bf Theorem C (Meeks).}  \emph{Spheres are the only properly embedded surfaces in $\R^3$ with non-zero constant mean curvature, finite topology and at most one end.}

\vspace{0.2cm}

In this paper we will prove Theorems A and B for \emph{any} $F$ (subject to \eqref{concon}), and Theorem C for choices of $F$ that are symmetric with respect to three linearly independent directions. We next explain our results in more detail, and give an outline of the paper. Recall that, in all that follows, $F$ is a positive smooth function on $\S^2$ that satisfies the ellipticity condition \eqref{concon}.

In Section \ref{sec:2} we study complete surfaces with CAMC and bounded second fundamental form. First, we will prove a compactness theorem for this type of surfaces (Theorem \ref{compacidad}), based on elliptic theory. Then, we will give a general \emph{curvature estimate}, by proving that there exists a uniform a priori estimate on the norm of the second fundamental form of any complete surface with non-zero CAMC whose Gauss map image omits an open set of $\S^2$; see Theorem \ref{curvatura}. For that, we will use Theorem \ref{compacidad} and a rescaling argument. The proofs are inspired in previous work by the first two authors with A. Bueno on complete surfaces in $\R^3$ of prescribed mean curvature (not necessarily constant); see \cite{BGM}. 

In Section \ref{sec:3}, which contains the core result of the paper, we will study complete \emph{multigraphs} of CAMC. Here, following the standard terminology, we will say that an immersed surface $\Sigma$ in $\R^3$ is a \emph{multigraph} if there exists a plane $P\subset \R^3$ such that $\Sigma$ is locally a graph over $P$ around each point of $\Sigma$. Equivalently, the Gauss map image of $\Sigma$ lies in an open hemisphere of $\S^2$. Obviously, every graph is a multigraph. Our main result in Section \ref{sec:3} is Theorem \ref{multigrafo}, namely:

\begin{center}
\emph{Any complete multigraph with constant anisotropic mean curvature is a plane.}
\end{center}

This result can be seen as a kind of general Bernstein-type theorem for CAMC surfaces. For anisotropic minimal surfaces, this is a theorem by Jenkins \cite{J}. It was proved by Koiso-Palmer in \cite{KP} in the particular case when \eqref{11} is close to the area functional in a suitable sense, for complete \emph{stable} CAMC surfaces (not necessarily multigraphs).

The proof of Theorem \ref{multigrafo} differs completely from the approaches by Jenkins, Koiso-Palmer and Hoffman-Osserman-Schoen. Instead, it relies on ideas developed by Hauswirth, Rosenberg and Spruck \cite{Hauswirth}, and by Espinar and Rosenberg \cite{ER} (see also \cite{Manzano}), in the study of complete surfaces of constant mean curvature in homogeneous three-manifolds (see \cite{DHM,FM} for more information on this theory). 

For the proof of Theorem \ref{multigrafo}, we will use the following construction (see also \cite{GM}). Given a unit vector $v_0\in\s^2$, let ${\cal W}_0$ denote the set of points of the Wulff shape ${\cal W}$ whose unit normal is orthogonal to $v_0$. Then, the flat cylinder
$$
{\cal C}_{v_0}=\{p+\lambda v_0\in\r^3:\ p\in{\cal W}_0,\lambda\in\r\}
$$
is smooth and has CAMC equal to $-1$, with respect to its exterior unit normal; it will be called a \emph{CAMC cylinder}. 

As a consequence of Theorem \ref{multigrafo} we will extend Theorems A and B above to the general anisotropic case, and prove:

\begin{enumerate}
\item
\emph{Any complete immersed CAMC surface whose Gauss map image is contained in a closed hemisphere of $\S^2$ is a plane or a CAMC cylinder.} (Corollary \ref{corolario1}).\vspace{0.2cm}
\item
\emph{Any complete immersed surface with non-zero CAMC, and whose Gaussian curvature does not change sign, is a CAMC cylinder or the Wulff shape, up to homotheties.} (Theorem \ref{kno}).
\end{enumerate}

In Section \ref{sec:4} we will study properly embedded surfaces of non-zero CAMC. We will start by proving some height estimates for graphs of non-zero CAMC and planar boundary (Lemma \ref{compactas}, Lemma \ref{meeks} and Theorem \ref{meeks2}). Then, we will derive geometric consequences of these estimates and of Meeks' \emph{separation lemma}, in the case that the Wulff shape $\cW$ is invariant with respect to some Euclidean reflection. Our final result, Theorem \ref{final}, will characterize the Wulff shape (up to homothety) as the only properly embedded surface in $\R^3$ with non-zero CAMC, finite topology and at most one end, assuming that $\cW$ is symmetric with respect to three linearly independent planes of $\R^3$. This gives a wide extension of Meeks' result (Theorem C in the introduction) to the anisotropic case. For a similar result in the case of properly embedded surfaces in $\R^3$ with prescribed (non-constant) mean curvature, see \cite{BGM}.

\section{CAMC surfaces with bounded second fundamental form}\label{sec:2}

In all that follows, we let $F$ be a smooth positive function on $\S^2$ that satisfies the uniform ellipticity condition \eqref{concon}, and we let $\cW\subset \R^3$ denote its associated Wulff shape, which is an ovaloid in $\R^3$. Whenever we write CAMC, it is understood that we mean CAMC with respect to the function $F$. Unless otherwise stated, by a \emph{surface} in $\R^3$ we mean an immersed one, i.e., not necessarily embedded.

Let $\Sigma$ be an oriented surface in $\R^3$, and let $\nu:\Sigma\flecha \cW$ denote its anisotropic Gauss map, defined in \eqref{def:agm}. For each $p\in\Sigma$, the endomorphism $A_p:=-d\nu_p$ can be described in terms of the (Euclidean) Weingarten endomorphism $S=-dN$ of $\Sigma$, and of the differential $\cS:= d\eta^{-1}$ of the outer unit normal $\eta^{-1}:\cW\flecha \S^2$ of $\cW$, as 
\begin{equation}\label{1}
A_p=({\mathcal S}_{\nu(p)})^{-1}\circ S_p. 
\end{equation}
Note that $A$ agrees with the Weingarten endomorphism of $\Sigma$ when $F\equiv 1$, since in that case ${\mathcal W}=\s^2\subset \R^3$ and ${\mathcal S}={\rm Id}$. Also, note that if for $\Sigma={\mathcal W}$ we choose the orientation given by its outward pointing unit normal $N=\eta^{-1}$, then $\nu(p)=p$ and $A_p=-{\rm Id}$ for every $p\in{\mathcal W}$.

Although $A$ is not self-adjoint, it is diagonalizable, and its eigenvalues $\landa_1,\landa_2$ are called the \emph{anisotropic principal curvatures} of $\Sigma$. We have $H=\landa_1+\landa_2$, where $H$ is the anisotropic mean curvature of $\Sigma$.

The anisotropic mean curvature behaves with respect to ambient homotheties as follows: if $\psi:\Sigma\flecha \R^3$ is an immersed surface with anisotropic mean curvature $H$ with respect to its unit normal $N$, and we consider the homothety $\Phi_c$ of $\R^3$ of ratio $c\in \R-\{0\}$, then the immersion $\tilde{\psi}:=\Phi_c\circ \psi$ has anisotropic mean curvature $\tilde{H}=\frac{1}{c} H$ with respect to the unit normal $\tilde{N}$ of $\tilde{\psi}$ given by $\tilde{N}(p)=N(p)$ for all $p\in \Sigma$. In particular, we will always be able to assume that, up to ambient homothety, a surface with non-zero CAMC has $H=-2$, i.e. the value of the constant anisotropic mean curvature of the Wulff shape for its exterior unit normal. Observe that, by the previous comments, the image of the Wulff shape with respect to the antipodal map of $\R^3$ has CAMC equal to $2$ with respect to its interior unit normal. 

Let $u(x,y)$ be a smooth function whose graph $z=u(x,y)$ has anisotropic mean curvature $H(x,y)$ with respect to its upwards-pointing unit normal, given by
$$
N=\frac{1}{\sqrt{1+p^2+q^2}}(-p,-q,1), \hspace{0.5cm} p=u_x, q=u_y.
$$
Then, we can write
$$
S(\partial_x)=-N_x=a_{11}\partial_x+a_{21}\partial_y,\qquad
S(\partial_y)=-N_y=a_{12}\partial_x+a_{22}\partial_y, 
$$
where
$$
\begin{pmatrix}
a_{11}&a_{12}\\
a_{21}&a_{22}
\end{pmatrix}
=
\frac{1}{(1+p^2+q^2)^{3/2}}
\begin{pmatrix}
1+q^2&-pq\\
-pq&1+p^2
\end{pmatrix}
\begin{pmatrix}
u_{xx}&u_{xy}\\
u_{xy}&u_{yy}
\end{pmatrix}.
$$

Besides, since $N$ only depends on $(p,q)$, we have that the inverse of the endomorphism $\cS$ defined before \eqref{1}, at a point $N(x,y)$, can be written as
$$
{\mathcal S}^{-1}_{N(x,y)}(\partial_x)=b_{11}\partial_x+b_{21}\partial_y,\qquad
{\mathcal S}^{-1}_{N(x,y)}(\partial_x)=b_{12}\partial_x+b_{22}\partial_y,
$$
where, by (\ref{eta}), the functions $b_{ij}$ only depend on $p,q$ and the derivatives up to second order of the function $F:\S^2\flecha \R$. In this way, we have from \eqref{1} that the graph $z=u(x,y)$ has CAMC  $H(x,y)=H_0$ if and only if $u$ satisfies a certain quasilinear elliptic PDE of the form
\begin{equation}\label{edp}
a(u_x,u_y)u_{xx}+b(u_x,u_y)u_{xy}+c(u_x,u_y)u_{yy}=H_0,
\end{equation}
where the coefficients $a,b,c\in C^{\8}(\R^2)$ are completely determined by $F$; here, the ellipticity of \eqref{edp} comes from \eqref{concon}, as explained in the introduction.­ This allows to use elliptic theory in order to prove the following compactness result:

\begin{theorem}\label{compacidad}
Let $\Sigma_n$ be a sequence of complete CAMC surfaces (possibly with boundary, $\parc \Sigma_n$) in $\r^3$ with respect to $F$, choose points $p_n\in\Sigma_n$ for each $n$, and assume that the following conditions hold:
\begin{enumerate}
\item [(i)]There exists a sequence of positive numbers $\{r_n\}$ with $r_n\rightarrow\infty$ such that the geodesic disks $D(p_n,r_n)\subset \Sigma_n$ centered at  $p_n$ and of radius $r_n$ are contained in the interior of $\Sigma_n$, i.e., $d(p_n,\partial\Sigma_n)\geq r_n$.
\item [(ii)] $\{p_n\}\rightarrow p_0$ for some $p_0\in\r^3$.
\item [(iii)] There exists $C>0$ such that $|\sigma_n(x)|\leq C$ for all $n\in\n$ and all $x\in\Sigma_n$, where $|\sigma_n|$ denotes the norm of the second fundamental form of $\Sigma_n$.
\item [(iv)] $H_n\rightarrow H_0\in\r$, where $H_n$ is the (constant) anisotropic mean curvature of $\Sigma_n$.
\end{enumerate} 
Then, for any $k\geq 2$, there exists a subsequence of $\{\Sigma_n\}$ that converges uniformly on compact sets in the $C^k$ topology to a complete immersed surface without boundary in $\R^3$, possibly non-connected, passing through $p_0$, with  bounded second fundamental form, and with constant anisotropic mean curvature equal to $H_0$.
\end{theorem}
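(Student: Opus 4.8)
The plan is a standard compactness argument based on interior estimates for the quasilinear elliptic equation \eqref{edp}, combined with a diagonal exhaustion. The first step is purely geometric and uses only hypothesis (iii): the uniform bound $|\sigma_n|\le C$ forces a uniform lower bound on the size of graphical neighborhoods of $\Sigma_n$. Concretely, I would first prove that there exist constants $\delta_0=\delta_0(C)>0$ and $M_0=M_0(C)>0$, independent of $n$, such that for every $n$ and every $q\in\Sigma_n$ with $d(q,\partial\Sigma_n)\ge\delta_0$, after a Euclidean motion taking $q$ to the origin and $T_q\Sigma_n$ to the horizontal plane, the intrinsic disk $D(q,\delta_0)\subset\Sigma_n$ is the graph $z=u_{n,q}(x,y)$ of a function over the planar disk of radius $\delta_0$, with $u_{n,q}(0)=0$, $\nabla u_{n,q}(0)=0$, and $\|u_{n,q}\|_{C^{1,1}}\le M_0$; the $C^{1,1}$ bound is immediate because $|\sigma_n|\le C$ controls the Hessian of the graph. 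This is the usual fact that a surface with bounded second fundamental form cannot bend too fast.

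Next I would invoke interior elliptic regularity for \eqref{edp}. Each $u_{n,q}$ solves \eqref{edp} with right-hand side $H_n$; the coefficients $a,b,c\in C^\infty(\R^2)$ are fixed (they depend only on $F$), the equation is uniformly elliptic on the region $\{|\nabla u|\le M_0\}$ with all coefficient norms there controlled by $M_0$ and $F$ via \eqref{concon}, and $\sup_n|H_n|<\infty$ by (iv). Since Step~1 already gives a uniform $C^{1,\alpha}$ bound, interior Schauder estimates applied to \eqref{edp} and bootstrapped in the usual way yield, for every $k\ge 2$ and every $\delta_1<\delta_0$, a constant $M_k$ independent of $n$ and $q$ with $\|u_{n,q}\|_{C^{k}(D_{\delta_1})}\le M_k$.

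Now the exhaustion. Fix $R>0$; by (i), $r_n>R+\delta_0$ for $n$ large, so $d(q,\partial\Sigma_n)\ge\delta_0$ for every $q\in\overline{D(p_n,R)}$, while $p_n\to p_0$ by (ii). Cover $\overline{D(p_n,R)}$ by finitely many intrinsic disks $D(q_i^{(n)},\delta_0/2)$, the number and the overlap pattern being controlled uniformly in $n$ by a standard packing argument (the bound on $|\sigma_n|$ yields uniform curvature, hence uniform volume bounds for intrinsic balls). Passing to a subsequence along which all the associated graph functions converge in $C^k(D_{\delta_1})$, the graphs and the transition maps also converge on overlaps, and patching them produces an immersed surface $\Sigma_\infty^R\subset\R^3$ through $p_0$, equal to the $C^k$-limit on compact sets of the pieces $D(p_n,R)\subset\Sigma_n$. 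Doing this for $R=1,2,3,\dots$ with nested subsequences and taking the diagonal subsequence, we obtain an immersed surface $\Sigma_\infty$ through $p_0$ that is the $C^k$-limit on compact sets of $\{\Sigma_n\}$. Because the convergence is at least $C^2$, the induced metrics converge, so the intrinsic disk $D(p_\infty,R)\subset\Sigma_\infty$ (with $p_\infty$ corresponding to $p_0$) is the limit of the $D(p_n,R)$, which are compact and contained in the interior of $\Sigma_n$ for $n$ large; hence $\Sigma_\infty$ is complete and without boundary. The bound $|\sigma_\infty|\le C$ passes to the limit, and each local graph function of $\Sigma_\infty$ is a $C^2$-limit of functions $u_{n,q}$ solving \eqref{edp} with $H_n\to H_0$, so it solves \eqref{edp} with right-hand side $H_0$; thus $\Sigma_\infty$ has constant anisotropic mean curvature $H_0$.

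The analytic input in Steps~1 and 2 is classical, so I expect the main difficulty to be the bookkeeping in Step~3: gluing the local graphical limits into a single well-defined immersed surface — which need not be embedded, so one must carry the abstract surface together with its immersion into $\R^3$ — consistently over the exhausting radii, and then deducing completeness and the absence of boundary of $\Sigma_\infty$ from $r_n\to\infty$. The "possibly non-connected" clause accounts for the fact that the limit is taken on compact subsets of $\R^3$, so intrinsically distant parts of the $\Sigma_n$ may accumulate on distinct sheets; the construction above produces the component of $\Sigma_\infty$ through $p_0$, and one obtains the remaining components by applying the same argument along other convergent sequences of base points.
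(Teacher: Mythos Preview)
Your proposal is correct and follows essentially the same strategy as the paper: the uniform bound on $|\sigma_n|$ gives uniform graphical charts (the paper cites \cite[Proposition~2.3]{RST} for this), Schauder estimates on the quasilinear PDE \eqref{edp} bootstrap the $C^{1,1}$ control to $C^{k,\alpha}$, and Arzel\`a--Ascoli plus a diagonal process produces the complete limit with CAMC $H_0$. The only organizational difference is in Step~3: rather than covering each $D(p_n,R)$ by a uniformly bounded family of charts and controlling their ``overlap pattern'' in $n$, the paper extends the limit one chart at a time --- once a limit piece $\Sigma_u$ through $p_0$ is built, pick any $q\in\Sigma_u$, locate approximating points $q_n\in\Sigma_n$ with $q_n\to q$, and rerun the argument centered at the $q_n$ --- which sidesteps the packing bookkeeping you flag as the main difficulty.
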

\begin{proof}
By a well-known result about immersed surfaces with bounded second fundamental form in Riemannian $3$-manifolds (see e.g. Proposition 2.3 in \cite{RST}), it follows from (i) and (iii) that there exist positive constants $\delta,\mu>0$ that only depend on $C$ (and not on $H_n$ or $\Sigma_n$) such that for any $n$ sufficiently large, the following properties hold:
\begin{enumerate}
\item An open neighborhood of $p_n\in\Sigma_n$ is the graph of a function $v_n$ defined on the Euclidean disk $D_{2\delta}\subset T_{p_n}\Sigma_n$ centered at the origin, and of radius $2\delta$ .
\item The ${\mathcal C}^2$ norm of the function $v_n$ in $D_{2\delta}$ is at most $\mu/2$. 
\end{enumerate}

By passing to a subsequence if necessary, we may assume that the Gauss map images in $\S^2$ of the points $p_n$ converge to a unit vector $N_0\in\s^2$. Thus, after a change of Euclidean coordinates $(x,y,z)$ so that $p_0$ corresponds to the origin and $N_0=(0,0,1)$, we have that, for $n$ sufficiently large:
\begin{enumerate}
\item An open neighborhood $D_n$ of $p_n\in\Sigma_n$ can be seen as the graph of a function $u_n$ defined on the disk $B_{\delta}=\{(x,y)\in\r^2:\ x^2+y^2<\delta^2\}$.
\item The ${\mathcal C}^2$ norm of the function $u_n$ in $B_{\delta}$ is at most $\mu$. 
\end{enumerate}

Moreover, by \eqref{edp}, we see that the functions $u_n$ are solutions to the linear PDE $L_nu_n=H_n$, where
$$
L_nu=a_n(x,y)\ u_{xx}+b_n(x,y)\ u_{xy}+c_n(x,y)\ u_{yy}
$$
and we are denoting $a_n(x,y):=a((u_n)_x(x,y),(u_n)_y(x,y))$, etc.

By the second condition above, each function $u_n$ lies in ${\cal C}^{1,\alpha}(B_{\delta})$. So, in particular, all the functions $a_n,b_n,c_n$ are bounded in the ${\mathcal C}^{0,\alpha}(B_{\delta})$ norm. It follows then by the classical Schauder theory that for any positive number $\delta'<\delta$ there exists a constant $C'$ independent of $n$ so that $\|u_n\|_{{\cal C}^{2,\alpha}(B_{\delta'})}\leq C'$.

Therefore, the coefficients $a_n,b_n,c_n$ of the linear equation $L_nu_n=H_n$ are uniformly bounded in the ${\cal C}^{1,\alpha}(B_{\delta'})$ norm. By iterating this process, we obtain for each $\delta'\in (0,\delta)$ the existence of a positive constant $C''=C''(\delta')$ such that
$$
\|u_n\|_{{\cal C}^{k,\alpha}(B_{\delta'})}\leq C'',
$$
for $n$ sufficiently large.

Once here, a standard application of the Arzelà-Ascoli theorem shows that there exists a subsequence of $\{u_n\}$ that converges on the disk $B_{\delta'}$ with respect to the ${\cal C}^k$ topology to a solution $u$ to \eqref{edp}. That is, the graph $\Sigma_u$ of the function $u(x,y)$ has CAMC equal to $H_0$. By construction, it also passes through $p_0$ and, since $k\geq 2$, the norm of the second fundamental form of $\Sigma_u$ is bounded by $C$.

Consider now some point $(x_0,y_0)\in B_{\delta'}$ and let $q\in\Sigma_u$ be its image via $u(x,y)$. Since $u$ is a limit of the functions  $u_n$, the points $q_n=(x_0,y_0,u_n(x_0,y_0))\in\Sigma_n$ converge to $q$. Therefore, after passing to a subsequence if necessary, we can assume that the first statement of Theorem \ref{compacidad} holds. Thus, by repeating the above argument, but this time with respect to the points $q_n$ and $q$ we obtain an immersed surface  $\Sigma$ with CAMC $H_0$ that extends $\Sigma_u$ and is well defined as a graph over the disk of radius $\delta'$ centered at the origin of the tangent plane $T_q\Sigma$.

Once here, we may use again the first condition of Theorem \ref{compacidad} and a standard diagonal process to show that $\Sigma$ can be extended to be a complete surface with CAMC $H_0$, that contains $p$ and whose norm of the second fundamental form is bounded by $C$. Moreover, by construction, such surface is a limit in the $\cC^k$ topology on compact sets of the sequence $\{\Sigma_n\}$, as we wished to show.
\end{proof}

\begin{remark}
Let $\kappa_1,\kappa_2$ denote the principal curvatures of an immersed surface $\Sigma$ in $\R^3$, and let $\lambda_1,\lambda_2$ denote its anisotropic principal cuvatures. Then, it is easy to see that the function $\kappa_1^2 + \kappa_2^2$ is bounded on $\Sigma$ if and only if $\landa_1^2 + \landa_2^2$ is bounded. Indeed, this is an immediate consequence of \eqref{1}, since the endomorphism $\cS:=d\eta^{-1}$ appearing there is (up to sign) the Weingarten endomorphism of an ovaloid of $\R^3$ (specifically, of the Wulff shape $\cW$).

As a result, the condition (iii) in Theorem \ref{compacidad} can be replaced by the existence of a constant $d>0$ such that the norms of the anisotropic Weingarten endomorphism $A_n$ of  $\Sigma_n$ satisfy
$
|(A_n)_x|\leq d$ for every $n\in\n$ and every $x\in\Sigma_n
$.
\end{remark}

In 1961, H.B. Jenkins proved in \cite{J} that any complete anisotropic minimal surface whose Gauss map image omits a spherical disk of $\S^2$ must be a plane. We will next use Jenkins' theorem to prove that if the Gauss map image of a complete surface $\Sigma$ with CAMC omits a spherical disk, then $\Sigma$ has bounded second fundamental form. As a matter of fact, we will prove the following more general estimate:

\begin{theorem}\label{curvatura}
Let $h,\rho,d$ be positive constants. Then, there exists a constant $C=C(h,\rho,d)$ such that the following assertion holds:

Let $\Sigma$ be a complete surface in $\R^3$, possibly with boundary, and with CAMC equal to $H\in \R$. Assume:
\begin{enumerate}
\item[(i)] $|H|\leq h$.
\item[(ii)] The Gauss map image $N(\Sigma)\subset\s^2$ of $\Sigma$ omits a spherical disk of radius $\rho$.
\end{enumerate}
Then, for any $p\in\Sigma$ with $d_{\Sigma}(p,\partial\Sigma)\geq d$ it holds
$$
|\sigma_{\Sigma}(p)|\leq C.
$$
Here, $d_{\Sigma}$ and $|\sigma_{\Sigma}|$ denote, respectively, the intrinsic distance in $\Sigma$ and the norm of the second fundamental form of $\Sigma$.
\end{theorem}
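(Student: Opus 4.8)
The plan is to argue by contradiction using a standard blow-up (rescaling) argument combined with the compactness theorem (Theorem \ref{compacidad}) and Jenkins' theorem for anisotropic minimal surfaces. Suppose the estimate fails. Then there exist positive constants $h,\rho,d$, a sequence of complete CAMC surfaces $\Sigma_n$ (with boundary) satisfying (i) and (ii), and points $p_n\in\Sigma_n$ with $d_{\Sigma_n}(p_n,\partial\Sigma_n)\geq d$ such that $|\sigma_{\Sigma_n}(p_n)|\to\infty$. We want to produce, after rescaling, a limit surface that is a complete anisotropic minimal surface whose Gauss map omits a spherical disk but which is not a plane (because it has a point where the second fundamental form is nonzero, indeed normalized to have norm $1$), contradicting Jenkins' theorem \cite{J}.

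The first step is to choose the blow-up points correctly. Instead of $p_n$ itself, one uses a point-selection (Ekeland-type / maximum-of-a-weighted-quantity) lemma: inside the geodesic disk $D(p_n,d/2)\subset\Sigma_n$ one picks $q_n$ maximizing the function $x\mapsto |\sigma_{\Sigma_n}(x)|\, \bigl(d/2 - d_{\Sigma_n}(q_n,x)\bigr)$ — or more simply, one rescales so that $|\sigma_{\Sigma_n}(q_n)| =: \lambda_n\to\infty$ is (comparably) maximal on a ball of intrinsic radius $\to\infty$ after rescaling. Concretely, set $\lambda_n = |\sigma_{\Sigma_n}(q_n)|$ and consider the homothetically rescaled surfaces $\wt\Sigma_n := \lambda_n(\Sigma_n - q_n)$. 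By the homothety behavior of the anisotropic mean curvature recalled in the text, $\wt\Sigma_n$ has CAMC $\wt H_n = H_n/\lambda_n \to 0$ (using (i)). The point-selection guarantees: the rescaled surfaces have $|\sigma_{\wt\Sigma_n}|\leq 1 + o(1)$ (so condition (iii) of Theorem \ref{compacidad} holds with a uniform $C$), they pass through the origin where $|\sigma_{\wt\Sigma_n}(0)| = 1$, and the intrinsic radius of the embedded disk about the origin tends to $\infty$ (condition (i) of Theorem \ref{compacidad}: here one uses $\lambda_n\to\infty$ together with $d_{\Sigma_n}(q_n,\partial\Sigma_n)\geq d/2$, so $d_{\wt\Sigma_n}(0,\partial\wt\Sigma_n)\geq \lambda_n d/2\to\infty$). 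Condition (ii) is immediate since the base point is the origin.

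Now apply Theorem \ref{compacidad} to $\{\wt\Sigma_n\}$ with base points $0$: a subsequence converges in $C^k$ on compact sets to a complete immersed surface $\Sigma_\infty$ without boundary, passing through the origin, with bounded second fundamental form, and with constant anisotropic mean curvature equal to $\lim \wt H_n = 0$. Thus $\Sigma_\infty$ is a complete anisotropic minimal surface. By the $C^2$ convergence, $|\sigma_{\Sigma_\infty}(0)| = \lim |\sigma_{\wt\Sigma_n}(0)| = 1 \neq 0$, so $\Sigma_\infty$ is not a plane. Finally, the Gauss map images: since homotheties of $\R^3$ do not change the Gauss map (only translate and dilate the surface), the Gauss map image of $\wt\Sigma_n$ equals that of $\Sigma_n - q_n$, which is $N(\Sigma_n)$, hence omits a spherical disk of radius $\rho$. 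Passing to the limit (again using $C^1$ convergence, and after possibly passing to a further subsequence so that the omitted disks, parametrized by their centers in $\S^2$, converge to a disk of radius $\rho$), the Gauss map image of $\Sigma_\infty$ omits a spherical disk of radius $\rho$. Jenkins' theorem then forces $\Sigma_\infty$ to be a plane — a contradiction. Hence the estimate must hold.

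\textbf{The main obstacle} is making the point-selection step genuinely yield the two simultaneous properties needed: a uniform bound on $|\sigma_{\wt\Sigma_n}|$ on a ball whose rescaled radius blows up, \emph{and} $|\sigma_{\wt\Sigma_n}(0)|=1$ exactly. This is the classical but delicate part of such blow-up arguments: one must localize to geodesic disks $D(p_n, d)$ (using $d_{\Sigma_n}(p_n,\partial\Sigma_n)\geq d$), choose $q_n$ via a maximizing procedure for $f_n(x) := |\sigma_{\Sigma_n}(x)|\,(d - d_{\Sigma_n}(p_n,x))$, and check that after rescaling by $\lambda_n := |\sigma_{\Sigma_n}(q_n)|$ (which $\to\infty$ because $f_n(p_n)\to\infty$ forces $f_n(q_n)\to\infty$ as well, while $d - d_{\Sigma_n}(p_n,q_n)\leq d$) the curvature is bounded by a uniform constant on intrinsic balls of radius $\to\infty$ around the origin. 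A secondary technical point is ensuring the omitted spherical disks converge; this is handled by compactness of $\S^2$ (the centers lie in $\S^2$, so a subsequence of centers converges, and the radius $\rho$ is fixed), combined with the $C^1$-convergence of Gauss maps on compact sets. Once these points are in place, the argument is a direct application of Theorem \ref{compacidad} and Jenkins' theorem.
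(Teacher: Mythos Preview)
Your proposal is correct and follows essentially the same approach as the paper: contradiction, point-picking via maximizing $|\sigma_{\Sigma_n}(x)|\cdot d_{\Sigma_n}(x,\partial D_n)$ on the disk $D_n=D_{\Sigma_n}(p_n,d/2)$, rescaling by $\lambda_n=|\sigma_{\Sigma_n}(q_n)|$, applying Theorem~\ref{compacidad} to obtain a complete anisotropic minimal limit with $|\sigma|=1$ at the origin and Gauss map omitting a fixed disk (after extracting a subsequence of centers in $\S^2$), and contradicting Jenkins' theorem. Two small corrections: in your first description of the point-picking function the weight should be $d/2-d_{\Sigma_n}(p_n,x)$ (not $d_{\Sigma_n}(q_n,x)$), and the resulting curvature bound on the relevant rescaled sub-disk is a fixed constant (the paper obtains $2$), not $1+o(1)$; neither affects the argument.
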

\begin{proof}
We proceed arguing by contradicion. If the statement of Theorem \ref{curvatura} was not true, there would exist a sequence of complete immersed surfaces $f_n:\Sigma_n\fl\r^3$, possibly with boundary, with CAMC of values $H_n$, which satisfy properties (i), (ii) above, and points $p_n\in\Sigma_n$ such that $d_{\Sigma_n}(p_n,\partial\Sigma_n)\geq d$ and $|\sigma_{\Sigma_n}(p_n)|>n$.

Take $c_n\in\s^2$ such that the Gauss map image of $f_n$ omits the geodesic disk of $\S^2$ centered at $c_n$ and of radius $\rho$. By compactness of $\S^2$, the sequence $\{c_n\}$ has some accumulation point $c_0\in\s^2$. So, passing to a subsequence if necessary, we may assume that the Gauss map image of all the immersions $f_n$ omit the same geodesic disk of $\S^2$ centered at $c_0$ and of radius $\rho/2$.

Let $D_n=D_{\Sigma_n}(p_n,d/2)$ denote the intrinsic compact metric disk in $\Sigma_n$ centered at $p_n$ and of radius $d/2$; note that $D_n$ is at a positive distance from $\partial\Sigma_n$. Let $q_n$ denote the maximum in $D_n$ of the function
$$
h_n(q)=|\sigma_{\Sigma_n}(q)|d_{\Sigma_n}(q,\partial D_n),\qquad q\in D_n.
$$

Since $h_n$ vanishes on $\partial D_n$, it is clear that $q_n$ lies in the interior of $D_n$. Consider now $\lambda_n=|\sigma_{\Sigma_n}(q_n)|$ and $r_n=d_{\Sigma_n}(q_n,\partial D_n)$. Then,
\begin{equation}\label{lann}
\lambda_n r_n=|\sigma_{\Sigma_n}(q_n)| d_{\Sigma_n}(q_n,\partial D_n)=h_n(q_n)\geq h_n(p_n)>n\ \frac{d}{2}.
\end{equation}
In particular, $\{\lambda_n\}\rightarrow \infty$ as $n \rightarrow \infty$. Let us also observe that, if we denote $\hat{D}_n=D_{\Sigma_n}(q_n,r_n/2)\subset D_n$, then for any $z_n\in \hat{D}_n$ it holds
\begin{equation}\label{destr}
d_{\Sigma_n}(q_n,\partial D_n)\leq 2 d_{\Sigma_n}(z_n,\partial D_n).
\end{equation}
Consider next the immersions $g_n:\hat{D}_n\fl\r^3$ given by restricting to the disks $\hat{D}_n\subset\Sigma_n$ the immersions $\landa_n f_n$. Then, we obtain from 
\eqref{destr} the following estimate for the norm of the second fundamental form $\hat{\sigma}_n$ of $g_n$, at any point $z_n\in\hat{D}_n$:
\begin{equation}\label{unisec}
|\hat{\sigma}_{n} (z_n)|= \frac{|\sigma_{\Sigma_n}(z_n)|}{\lambda_n} =\frac{h_n(z_n)}{\lambda_n d_{\Sigma_n}(z_n,\partial D_n)}\leq\frac{h_n(q_n)}{\lambda_n d_{\Sigma_n}(z_n,\partial D_n)}=\frac{d_{\Sigma_n}(q_n,\partial D_n)}{d_{\Sigma_n}(z_n,\partial D_n)}\leq 2.
\end{equation}
This shows that the norms of the second fundamental forms of the immersions $g_n$ are uniformly bounded, and moreover, that $|\hat{\sigma}_n(q_n)|=1$. Also, note that by \eqref{lann}, the radii of the disks $\hat{D}_n$ with respect to the metric induced by $g_n$ diverge to infinity.

Up to a translation, we can assume that $g_n(q_n)$ is the origin of $\r^3$. Also, up to taking a subsequence, we may assume that the Gauss map images of $g_n$ at $q_n$ converge to some unit vector $N_0\in \S^2$. 
Let us choose canonical Euclidean coordinates $(x,y,z)$ so that $N_0=(0,0,1)$.

Once here, we will use a similar argument to the one of Theorem \ref{compacidad} in order to prove that a subsequence of the immersions $g_n:\hat{D}_n\fl\r^3$ converges uniformly on compact sets to a complete immersion with vanishing anisotropic mean curvature.

First, from \cite[Proposition 2.3]{RST} and arguing as in Theorem \ref{compacidad}, we obtain the existence of positive constants $\delta_0,\mu$ (that do not depend on $n$) with the property that for any $n$ large enough, a neighborhood in $g_n(\hat{D}_n)$ of the origin is given by the graph $z=u_n(x,y)$ of a function $u_n$ defined on the disk $B_{\delta_0}\subset\r^2$ centered at the origin and of radius $\delta_0$, with $\|u_n\|_{C^2(B_{\delta_0})} \leq \mu$.

Since the immersions $g_n$ have CAMC of value $H_n/\lambda_n$ and $|H_n|\leq h$, it follows that their anisotropic mean curvatures converge to zero. In this way, we can repeat the argument of Theorem \ref{compacidad} to deduce that the functions $u_n$ converge in the $\mathcal{C}^2(B_{\delta'_0})$-topology (with $0<\delta'_0<\delta_0$) to a smooth function $u_0$ whose graph $\Sigma_0$ has zero anisotropic mean curvature. Moreover,  $\Sigma_0$ can be globally extended to a complete minimal anisotropic surface $\Sigma$ that, by construction, is a limit in the $\mathcal{C}^2$-topology on compact sets of the sequence $\{g_n(\hat{D}_n)\}$.

Since the norm of the second fundamental form of $g_n(\hat{D}_n)$ is equal to $1$ at the origin for all $n$, the same happens to $\Sigma$. On the other hand, the Gauss map image of $\Sigma$ omits the geodesic disk centered at $c_0$ and of radius $\rho/2$, since this happens for all immersions $g_n$. This implies by Jenkins' theorem \cite{J} that $\Sigma$ must be a plane. But this contradicts that the norm of the second fundamental form of $\Sigma$ at the origin is equal to $1$. This contradiction proves Theorem \ref{curvatura}.
\end{proof}
\section{Characterization of planes, cylinders and Wulff shapes}\label{sec:3}
The present section will be mostly devoted to prove the following key result:
\begin{theorem}\label{multigrafo}
Any complete multigraph with constant anisotropic mean curvature is a plane.
\end{theorem}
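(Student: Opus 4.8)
\emph{Plan of proof.} Since the Gauss map image of a multigraph lies in an open hemisphere of $\mathbb{S}^2$, it omits a spherical disk; so if the surface is anisotropic minimal (CAMC $0$), Jenkins' theorem \cite{J} already gives that it is a plane. Hence the content is the non‑minimal case: I would assume $\Sigma$ is a complete multigraph with CAMC $H\neq 0$ and derive a contradiction. Up to a homothety of $\R^3$ we may take $H=-2$. By Theorem \ref{curvatura} (with $\partial\Sigma=\emptyset$, using condition \eqref{concon} as in Theorem \ref{curvatura}'s hypothesis (ii)), $\Sigma$ has uniformly bounded second fundamental form.

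\emph{The angle function.} Let $v_0$ be a unit normal to the plane $P$ over which $\Sigma$ is a multigraph, and set $\theta=\langle N,v_0\rangle$. Since $\Sigma$ is locally a graph over $P$ at each point, $\theta$ never vanishes; after fixing the orientation we may assume $0<\theta\leq 1$. Translations of $\R^3$ preserve the anisotropic Gauss map, hence the anisotropic mean curvature, so the constant field $v_0$ generates, via its normal component, an element of the kernel of the anisotropic Jacobi operator; that is, $L\theta=0$ where $L=\mathcal{L}+q$, $\mathcal{L}$ being a divergence‑form, uniformly elliptic second order operator whose leading coefficients are built (via the anisotropic Gauss map) from the positive definite form $\nabla^2 F+F\langle\,,\,\rangle_{\mathbb{S}^2}$ of \eqref{concon}, and $q\geq 0$ a potential with $q(p)=0$ if and only if $\Sigma$ is totally geodesic at $p$ (the standard second variation computation for \eqref{11}; cf. \cite{Palmer,KoisoPalmer}). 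In particular $\mathcal{L}\theta=-q\theta\leq 0$, i.e. $\theta$ is a positive $\mathcal{L}$‑supersolution; and by the logarithmic cut‑off trick the associated quadratic form $\phi\mapsto-\int_\Sigma\phi\,L\phi$ is nonnegative on compactly supported $\phi$.

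\emph{A limit surface realizing $\inf\theta$.} Put $c=\inf_\Sigma\theta\geq 0$ and choose $p_n\in\Sigma$ with $\theta(p_n)\to c$. Translating $p_n$ to the origin, the hypotheses of Theorem \ref{compacidad} hold ((i) by completeness, with any $r_n\to\infty$; (iii) by the curvature bound; (iv) trivially, $H_n\equiv-2$), so a subsequence converges in $C^k$ on compact sets to a complete CAMC surface $\Sigma_\infty$ of bounded second fundamental form, anisotropic mean curvature $-2$, with $0\in\Sigma_\infty$. Its angle function $\theta_\infty$ is the limit of the (translation‑invariant) $\theta$'s, so $\theta_\infty\geq c$ on $\Sigma_\infty$ and $\theta_\infty(0)=c$; thus $\theta_\infty$ is a nonnegative $\mathcal{L}$‑supersolution attaining its minimum at the interior point $0$, and the strong maximum principle gives $\theta_\infty\equiv c$.

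\emph{Conclusion and the main obstacle.} If $c>0$, then $0=\mathcal{L}c=-q\theta_\infty=-qc$ forces $q\equiv 0$ on $\Sigma_\infty$, so $\Sigma_\infty$ is totally geodesic, hence a plane, with CAMC $0\neq-2$, a contradiction. The hard case is $c=0$: then $\theta_\infty\equiv 0$, so $v_0$ is everywhere tangent to $\Sigma_\infty$ and $\Sigma_\infty$ is the vertical cylinder $\gamma\times\R v_0$ over a complete curve $\gamma\subset v_0^{\perp}$ of constant anisotropic curvature $-2$; since $-2\neq 0$, $\gamma$ must be a closed convex curve (the planar analogue of the characterization of the Wulff shape), so $\Sigma_\infty$ bounds a convex solid cylinder $\mathcal{R}$ and has CAMC $-2$ with respect to the outer normal of $\mathcal{R}$. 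I expect excluding the case $c=0$ to be the main obstacle; the plan there is either (a) to show that the positive $\mathcal{L}$‑supersolution $\theta$ must be constant — e.g. by proving that a complete multigraph of bounded curvature for which $-\int\phi\,L\phi\geq0$ is parabolic, whence $\theta\equiv 0$ contradicts $\theta>0$ — or (b) to run a maximum‑principle comparison between $\Sigma$ and the translates of $\Sigma_\infty$ on which it accumulates: since along the pieces where $\Sigma$ is $C^2$‑close to $\partial\mathcal{R}$ the two surfaces have the same CAMC and matching normals, sliding the convex cylinder $\mathcal{R}$ along $\Sigma$ and applying the interior or boundary maximum principle for equation \eqref{edp} should force $\Sigma$ to contain a piece of the vertical cylinder $\Sigma_\infty$, hence (by unique continuation and connectedness) to coincide with $\Sigma_\infty$, which is impossible since $\Sigma$ is a multigraph and $\Sigma_\infty$ is not. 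Once $c=0$ is ruled out, the $c>0$ argument above finishes the proof.
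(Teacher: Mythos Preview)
Your reduction to the $c=0$ case is sound: the compactness Theorem \ref{compacidad} together with the curvature bound from Theorem \ref{curvatura} does produce a limit $\Sigma_\infty$ on which $\theta_\infty$ attains its infimum, and when $c>0$ the Jacobi identity $L\theta_\infty=0$ forces $q\equiv0$, hence total geodesy, which is incompatible with $H=-2$. (You should still verify that the anisotropic Jacobi operator has exactly the structure you assert---divergence-form principal part annihilating constants, and potential vanishing only at totally geodesic points---rather than just citing \cite{Palmer,KoisoPalmer}.)

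The genuine gap is the case $c=0$, and neither of your proposed routes closes it. Route (a) would need either parabolicity of $\Sigma$ or a classification of complete stable CAMC surfaces. Neither is available: a complete multigraph with bounded curvature can perfectly well be conformally hyperbolic, and the only general anisotropic stability result is Koiso--Palmer's \cite{KP}, which requires $F$ close to $1$---precisely the restriction the present theorem is meant to remove. Even in the isotropic case the statement ``complete stable CMC $\Rightarrow$ plane'' is a nontrivial theorem with no known anisotropic analogue. Route (b) is too vague to constitute an argument: the cylinder $\Sigma_\infty$ arises only as a limit after translating $p_n\to\infty$, so $\Sigma$ is never tangent to any translate of it at a finite point, and there is no first-contact configuration on which to invoke the maximum principle for \eqref{edp}. ``Sliding the convex cylinder along $\Sigma$'' has no well-defined meaning here.

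The paper's proof takes a different route that bypasses the Jacobi operator entirely. Instead of looking at $\inf\theta$, it fixes $p\in\Sigma$, writes a neighbourhood as a vertical graph $z=u$ over a maximal disk $B(\pi(p),r_0)$, and analyses a boundary point $q_0$ past which $u$ cannot extend. The same compactness ingredients you use show that the vertically recentred graphs converge to a piece of a CAMC cylinder $\Gamma\times\R$ tangent to $\partial B(\pi(p),r_0)$ at $q_0$, that $u\to\pm\infty$ there, and that $u$ extends smoothly to a one-sided tubular neighbourhood of $\Gamma$. This upgrades ``$\Sigma$ limits to a cylinder at infinity'' to a precise description of the domain of $u$: its boundary is locally a translate of the convex curve $\Gamma$. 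A continuation argument then splits according to whether $u$ lives on the convex or concave side of $\Gamma$; in each case one slides a homothetic copy $2\mathcal W$ of the Wulff shape vertically to force an interior tangency with the graph of $u$, contradicting the maximum principle. The concave case is delicate---one must control how new cylinder-boundaries can appear as the domain is enlarged and show only finitely many arise---and this is exactly the content your sketch (b) gestures at but does not supply.
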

\begin{proof}
By Jenkins' theorem \cite{J}, a complete multigraph with zero anisotropic mean cuvature is a plane. So, to prove Theorem \ref{multigrafo} it suffices to check that there are no complete multigraphs with non-zero CAMC.

We will argue by contradiction. So, from now on, $\Sigma$ will denote a complete multigraph with CAMC of value $H_0\neq 0$. Up to a homothety of $\R^3$, we will assume $H_0=-1$. Moreover, we will fix Euclidean coordinates $(x,y,z)$ in $\R^3$ so that $\Sigma$ is a multigraph with respect to the $z=0$ plane, and so that the third coordinate $N_3$ of the unit normal $N$ to $\Sigma$ is negative at every point. We will let $\pi:\r^3\fl\r^2$ denote the vertical projection in $\R^3$.

For any given point $p\in\Sigma$, since $\Sigma$ is a multigraph, there exists a neighborhood $U\subset\Sigma$ of $p$ that is the graph $z=u(x,y)$ of a function $u$ defined on the disk $B(\pi(p),r)$ of $\r^2$ centered at $\pi(p)$ and of some radius $r>0$. This radius $r$ cannot be larger than $2d_{\cW}$, where $d_{\cW}$ is the diameter of the Wulff shape. Indeed, if $r>2d_{\cal W}$, let $u_m\in \R$ be the maximum value of $u$ on the closed disk  $\overline{B}(\pi(p),2d_{\cal W})$. Note that the dilation of ratio $2$ of the Wulff shape transforms $\cW$ into a surface with CAMC $-1$, that we will denote by $2{\cal W}$. In this way, we can translate $2{\cal W}$ in $\R^3$ so that it is placed over $\overline{B}(\pi(p),2d_{\cal W})$, at a height greater than $u_m$, and then translate $2{\cal W}$ downwards until reaching a first (interior) contact point with $U$; this contradicts the maximum principle.

In the next paragraphs, we fix some notation that will be used in the rest of the proof. 

Given any $p\in \Sigma$, we will denote by $r_0=r_0(p)\in (0,2d_{\cal W}]$ the largest value of the radius $r$ for which the function $u$ above can be extended to the open disk $B(\pi(p),r_0)$. We will also let  $q_0\in \partial B(\pi(p),r_0)$ be a point for which the function $u$ cannot be extended to a neighborhood of $q_0$.

By Theorem \ref{curvatura}, we have that the norm of the second fundamental form of $\Sigma$ is uniformly bounded. Therefore, there exists some $\delta>0$, that will be considered fixed from now on, with the following property: any $p\in\Sigma$ has a neighborhood ${\cal U}_p\subset \Sigma$ that is a graph over the disk $B_p(\delta)\subset T_p\Sigma$ centered at the origin and of radius $\delta$ of its tangent plane at $p$ (see \cite[Proposition 2.3]{RST}). Let ${\cal U}_p^v$ be the vertical translation of the neighborhood ${\cal U}_p$ that takes $p$ to $(\pi(p),0)$, i.e. $\cU_p^v =\cU_p - (0,0,p_3)$, where $p=(p_1,p_2,p_3)$.

Finally, given a point $q\in \R^2$ and a cylinder ${\cal C}$ with CAMC $-1$ that passes through $(q,0)$, we will let $C_q$ be the neighborhood of $(q,0)$ in ${\cal C}$ that is a graph over the disk centered at the origin and of radius $\delta$ of $T_{(q,0)} \mathcal{C}$.

With these notations and comments in mind, we will start by proving the following claim:

\begin{assertion}\label{ass:1}
Let $p\in \Sigma$ so that $\Sigma$ can be seen locally around $p$ as a graph $z=u(x,y)$ over a disk $B(\pi(p),r_0)\subset \r^2$, and so that there exists $q_0\in \parc B(\pi(p),r_0)$ for which $u$ cannot be extended to a neighborhood of $q_0$.

Then, for any sequence $\{q_n\}\subset B(\pi(p),r_0)$ converging to $q_0$, it holds that the translated graphs $\cU_{(q_n,u(q_n))}^v$ converge in the $C^2$-topology to the neighborhood $C_{q_0}$ of $(q_0,0)$ of a cylinder $\mathcal{C}$ with CAMC equal to $-1$ with respect to its exterior unit normal $N_C$, that passes through $(q_0,0)$, and such that $N_C (q_0,0)$ is collinear with the horizontal vector $(\pi(p)-q_0,0)$.
\end{assertion}
\begin{proof}
Let $N_3$ denote the third coordinate of the unit normal $N$ of $\Sigma$. First of all, let us see that $\{N_3(p_n)\}\rightarrow0$, where $p_n:=(q_n,u(q_n))$. Indeed, if this was not the case, there would exist a subsequence $\{q_n\}\rightarrow q_0$ with $\{N_3(p_n)\}\rightarrow N_0\in[-1,0)$. Since ${\cal U}_{p_n}$ is a graph over a disk in $T_{p_n}\Sigma$ of fixed radius $\delta>0$, and $N_0\neq 0$, then for $n$ sufficiently large there exists a fixed $\varepsilon >0$ and a neighborhood ${\cal V}_{p_n}\subset\Sigma$ of $p_n$ that can be seen as a vertical graph over $B(q_n,\varepsilon)\subset \R^2$. This contradicts the fact that $u$ cannot be extended to a neighborhood of $q_0$, choosing $q_n$ sufficiently close to $q_0$. Thus, $\{N_3(p_n)\}\rightarrow0$.

Take now a subsequence of $\{q_n\}$ so that $\{N(p_n)\}$ converges to some unit vector $v_0\in \S^2$; this subsequence exists by compactness of $\S^2$. Since $\{N_3(p_n)\}\rightarrow0$, $v_0$ is a horizontal vector. In these conditions, using the ideas in the proof of Theorem \ref{compacidad}, it is clear that, up to a subsequence, ${\cal U}_{p_n}^v$ converges in the ${\cal C}^2$-topology to a (non-complete) surface $S$ with CMAC equal to $-1$, that is a graph over its tangent plane at $(q_0,0)$, and whose unit normal at that point is $v_0$. But once here we can note that the third coordinate of the unit normal of $S$ is non-positive (since it is a limit of vertical graphs), and vanishes at $(q_0,0)$. By a standard application of the maximum principle, we deduce then that this third coordinate vanishes identically on $S$ (see e.g. \cite{KP}). In this way, $S$ is contained in a cylinder $\mathcal{C}=\Gamma\times \R$ with CAMC equal to $-1$, and whose exterior unit normal at $(q_0,0)$ is $v_0$. Thus, $S\subset{\cal C}$; as a matter of fact, $S=C_{q_0}$, where here $C_{q_0}$ denotes the $\delta$-neighborhood of $(q_0,0)$ in $\mathcal{C}$, as explained prior to the statement of Assertion \ref{ass:1}. 

Let us next show that $v_0$ is collinear with $(\pi(p)-q_0,0)$. Consider the planes $Q:=\{v_0\}^{\perp}$ and $P:=\{(\pi(p)-q_0,0)\}^{\perp}$, and assume that $P\neq Q$. Then, since the cylinder $\mathcal{C}=\Gamma\times \R$ is tangent to $Q$ at $(q_0,0)$, any open arc of the base curve $\Gamma\subset \R^2$ that contains $q_0$ intersects $B(\pi(p),r_0)$.

Let $a_0\in B(\pi(p),r_0)$ so that $(a_0,0)\in C_{q_0}$. Then, by the convergence of ${\cal U}^v_{p_n}$ to $C_{q_0}$, there exist $b_n\in {\cal U}_{p_n}^v$ with $\{b_n\}\rightarrow (a_0,0)$ and so that $(\pi(b_n),u(\pi(b_n)))$ lies in the graph of $u$. Note that the tangent planes to $b_n\in {\cal U}_{p_n}^v$ become vertical. Hence, $|{\rm grad}\,u(\pi(b_n))|\rightarrow\infty$, which is impossible since $\{\pi(b_n)\}\rightarrow a_0$ and $u$ is well defined around $a_0$. This contradiction proves $P=Q$. Note that by uniqueness of the limit, the convergence of the $\{{\cal U}_{p_n}^v\}$ to $C_{q_0}$ we have just proved is global, i.e. the whole sequence converges and not just a subsequence of it. This finishes the proof of Assertion \ref{ass:1}. 
\end{proof}

It should be noted that there are two cylinders $\mathcal{C}=\Gamma\times \R$ that satisfy the conditions stated in Assertion \ref{ass:1}; they have opposite unit normals at $(q_0,0)$, and differ by a translation in $\R^3$. The next assertion is helpful in determining which of these two cylinders appears in the limit process described in Assertion \ref{ass:1}.

\begin{assertion}\label{ass:2}
In the conditions of Assertion \ref{ass:1}, let $\gamma_0(t)=(1-t)q_0+t \pi(p)$, $t\in(0,1]$, join $q_0$ and $\pi(p)$. Then, the function $u_0(t)=u(\gamma_0(t))$ satisfies that $\lim_{t\rightarrow 0}u_0(t)=\infty$ (resp. $-\8$) if $\gamma_0(t)$ lies locally in the convex (resp. concave) side of $\Gamma$ at $q_0$.
\end{assertion}
\begin{proof}
The function $u_0(t)$ is strictly monotonic for $t$ close to $0$, since by Assertion \ref{ass:1}, the unit tangent vector to the curve $(\gamma_0(t),u(\gamma_0(t)))$ has limit $(0,0,\pm1)$ as $t\rightarrow0$.

Let $h_0:=\lim_{t\rightarrow0}u_0(t)\in\r\cup\{-\infty,\infty\}$. In case $h_0\in\r$, the length of the curve $(\gamma_0(t),u(\gamma_0(t)))$ is finite, by the previous monotonicity property of $u_0(t)$. Thus, by completeness of $\Sigma$, we have $(q_0,h_0)\in\Sigma$, and moreover, the tangent plane to $\Sigma$ at this point  $(q_0,h_0)$ is a vertical plane, again by Assertion \ref{ass:1}. This is impossible, since $\Sigma$ is a multigraph.

Thus, $h_0=\pm \8$. Finally, since the unit normal to $\Sigma$ points downwards (i.e. $N_3<0$), we deduce that if $\lim_{t\rightarrow0}u_0(t)=\infty$ (resp. $-\infty$) then the (horizontal) limit unit normal of $\Sigma$ along $(\gamma_0(t),u(\gamma_0(t)))$ points in the direction of the vector $(q_0-\pi(p),0)$ (resp. $(\pi(p)-q_0,0)$). This proves Assertion \ref{ass:2}, taking into account that the limit cylinder $\Gamma\times \R$ is oriented with respect to its outer unit normal.
\end{proof}

For the next assertion, let $\Gamma(s)$ be an arc-length parametrization of $\Gamma$, with $\Gamma(0)=q_0$. Then, the neighborhood $C_{q_0}\subset\Gamma\times\r$ of the point $(q_0,0)$, projects to an open arc $\pi(C_{q_0})$ of $\Gamma$ that contains $\Gamma([-\delta,\delta])$. We then define the subset of $\r^2$ 
\begin{equation}\label{oep}
{\cal O}_{\varepsilon}=\{\Gamma(s)+t \,n_{\Gamma}(s):\ s\in[-\delta,\delta],t\in(0,\varepsilon)\},
\end{equation}
where $n_{\Gamma}(s)$ is the unit normal of $\Gamma(s)$ that, for $s=0$, points in the direction $\pi(p)-q_0$.

Recall that $u(x,y)$ is defined on $B(\pi(p),r_0)$ and cannot be extended across $q_0\in \parc B(\pi(p),r_0)$. With the previous definitions in mind, we will next prove an extension property of $u$ outside $B(\pi(p),r_0)$.

\begin{assertion}\label{ass:3}
In the above conditions, the graph $u(x,y)$ extends smoothly to $B(\pi(p),r_0)\cup {\cal O}_{\varepsilon}$ for some $\varepsilon>0$. Moreover this extension satisfies that $u(q)$ diverges to $\pm \8$ when $q\in{\cal O}_{\varepsilon}$ approaches $\Gamma.$
\end{assertion}
\begin{proof}
Given $t_0\in(0,1]$, let us define the open set $\Sigma_{t_0}\subset \Sigma$ given by
\begin{equation}\label{sito}
\Sigma_{t_0}=\bigcup_{0<t<t_0}{\cal U}_{(\gamma_0(t),u(\gamma_0(t)))},
\end{equation}
which is a connected neighborhood of the curve $\{(\gamma_0(t),u(\gamma_0(t))):\ 0<t< t_0\}\subset\Sigma.$ 

For each $s\in [-\delta,\delta ]$, let $P(s)$ be the vertical plane normal to $\Gamma$ that passes through $\Gamma(s)$. Recall that we proved in Assertion \ref{ass:1} that  
$$
{\cal U}^v_{(\gamma_0(t),u(\gamma_0(t)))}\fl C_{q_0}\subset\Gamma\times\r,\qquad \mbox{ when }
t\rightarrow0,
$$
in the ${\cal C}^2$ topology. In particular, this shows that the projection $\pi(\Sigma_{t_0})\subset \R^2$ of $\Sigma_{t_0}$ in \eqref{sito} contains some open set $\mathcal{O}_{\ep}$ as in \eqref{oep}. Also, it shows that there is some $t_0>0$ such that $P(s)$ intersects $\Sigma_{t_0}$ transversely for all $s\in[-\delta,\delta]$. Observe that all points in $\Sigma_{t_0}\cap P(0)$ lie in the curve $(\gamma_0(t),u(\gamma_0(t)))$, and in particular $\Sigma_{t_0}\cap P(0)$ is a connected graphical curve. In the same way, by transversality and the definition of $\Sigma_{t_0}$, it follows that there is some $t_0>0$ and some $\ep>0$ such that for each $s\in [-\delta,\delta]$, $\Sigma_{t_0}\cap P(s)$ is a unique curve, given as a graph over a segment in $\R^2$ of the form $\Gamma(s) + t n(s)$, where $t$ varies in an interval $I_s$ that contains $(0,\ep)$.

These properties show that $\Sigma_{t_0}$ is a graph when restricted to $\{q\in \Sigma_{t_0} : \pi(q)\in\mathcal{O}_{\ep}\}$. In particular, this proves that $u$ can be extended as a graph to $B(\pi(p),r_0)\cup {\cal O}_{\varepsilon}$.

Let us next prove that there exists some $t_0>0$ such that $\Sigma_{t_0}$ does not intersect $\Gamma\times \R$. To this respect, note that by Assertions \ref{ass:1} and \ref{ass:2}, and the definition of $\Sigma_{t_0}$, the curve $\Sigma_{t_0}\cap P(s)$ is asymptotic to the cylinder $\Gamma\times\r$ at infinity, but in principle it could intersect it.

In order to prove that $\Sigma_{t_0}$ does not intersect $\Gamma\times \R$ we suppose next, arguing by contradiction, that there is some $s_0\in(0,\delta]$ for which the curve $\Sigma_{t_0}\cap P(s_0)$ crosses the cylinder $\Gamma\times\r$ (the argument for $s_0\in [-\delta,0)$ is analogous). Then, since $\Sigma$ is a multigraph, the curve $\Sigma_{t_0}\cap P(s_1)$ also crosses $\Gamma\times\r$ for any $s_1<s_0$ sufficiently close to $s_0$.

This shows that there are two possible situations. Either $\Sigma_{t_0}\cap P(s)$ never intersects $\Gamma\times\r$, or else at the smallest value of $s\in (0,\delta]$ for which $\Sigma_{t_0}\cap P(s)$ intersects $\Gamma\times\r$, it happens that $\Sigma_{t_0}\cap P(s)$ does not cross $\Gamma\times\r$. But this second situation is impossible, since in that case, for $p_0\in \Sigma_{t_0}\cap P(s)\cap (\Gamma\times\r)$, the tangent plane of $\Sigma$ at $p_0$ would be vertical, what contradicts that $\Sigma$ is a multigraph.

Consequently, $\Sigma_{t_0}$ does not intersect $\Gamma\times \R$, for some $t_0>0$. This fact together with the asymptotic convergence of each curve $\Sigma_{t_0}\cap P(s)$ to the cylinder $\Gamma \times \R$ proves the asymptotic behavior in the statement, and completes the proof of Assertion \ref{ass:3}.
\end{proof}

We next make a continuation argument. Recall that the point $p$ was arbitrarily chosen on $\Sigma$. Thus, by choosing $p\in \Sigma$ so that its projection $\pi(p)$ lies sufficiently close to $\Gamma(\delta/2)$ and inside the half-line $\{\Gamma(\delta/2)+tn_{\Gamma}(\delta/2) : t>0\}$, we clearly see that 
the graph $u$ could also be extended along $\Gamma$ to the set
\begin{equation}\label{extens}
\{\Gamma(s)+t \,n_{\Gamma}(s):\ s\in[-\delta/2,3\delta/2],t\in(0,\varepsilon')\},
\end{equation}
for some $\varepsilon'>0$. This process can be continued. Specifically, assume that $\Gamma(s)$ is an injective parametrization of $\Gamma$ on an interval $(a,b]$, with $a<0<b$ and $\lim_{s\to a^+} \Gamma(s)=\Gamma(b)$ (recall that $\Gamma(0)= q_0$). Then, by the process above, there exists $\ep>0$ such that the function $u(x,y)$ can be smoothly extended to the open simply connected set
\begin{equation}
\label{entorno}
\{\Gamma(s)+t \,n_{\Gamma}(s):\ s\in(a,b),t\in(0,\varepsilon)\}.
\end{equation}

It is important to observe here that, since $\Sigma$ is a multigraph (not necessarily a graph), this extension cannot be carried out in a continuous way to the open annulus given by \eqref{entorno}, but this time with $s\in(a,b]$. Specifically, the extensions of $u$ along $\Gamma$ for positive values of $s$ and for negative values of $s$ might not glue together continuously as $s$ reaches the limit values $a$ and $b$.

Up to this moment, the proof has been following closely the related theorem in \cite{Hauswirth}. From now on, the argument is different.

Recall that, in the arguments above, there are two possible orientations for the limit cylinder $\Gamma\times \R$, as explained in Assertion \ref{ass:1}. So, to end up the proof we will distinguish two different cases, depending on the orientation of $n_{\Gamma}$ with respect to $\Gamma$ in the above construction, i.e. depending on whether $\Sigma$ lies locally on the convex or the concave part of the cylinder $\Gamma\times\r$ in the previous argument.

{\bf Case 1:} \emph{There exists $p\in \Sigma$ such that, for its corresponding point $q_0\in\partial B(\pi(p),r_0)$, the vector $n_{\Gamma}(0)$ in \eqref{oep} is the interior unit normal to $\Gamma$ at $q_0$.}

In that situation, we know by the previous discussion that $\Sigma$ is the graph of a function $u$ over an open set of the form \eqref{entorno} in the interior of $\Gamma$. Let us show that $\Sigma$ must actually be a graph over the whole region bounded by $\Gamma$.

For that, let  $\tilde{q}_0$ be the other point in $\Gamma$ with tangent line parallel to the tangent line of $\Gamma$ at $q_0$, and let
$$
\Gamma_{\lambda}:=\lambda(\tilde{q}_0-q_0)+\Gamma,\qquad 0<\lambda\leq 1,
$$
denote the Euclidean translation of $\Gamma$ with translation vector $\lambda(\tilde{q}_0-q_0)$. Let $\Omega_{\lambda}$  denote the planar (open) domain bounded by  $\Gamma$ and $\Gamma_{\lambda}$ that contains the open segment from $q_0$ to $q_0+\lambda(\tilde{q}_0-q_0)$. Observe that $\Omega_1$ coincides with the interior region bounded by $\Gamma$.

Since $u$ can be extended to an open subset of $\Omega_1$ of the form \eqref{entorno}, we see that there exists $\lambda>0$ such that $u$ is well defined in $\Omega_\lambda$. Let $\landa_0$ be the supremum of the values $\lambda>0$ for which $u$ is well defined in $\Omega_{\landa}$. 

Assume that $\lambda_0<1$. Then, there would exist some $q_1\in\Gamma_{\lambda_0}$ lying in the interior of $\Gamma$, and such that the restriction of $u$ to $\Omega_{\landa_0}$ cannot be extended across $q_1$.
Consider next a new point  $p_1$ of the graph of $u$ such that $B(\pi(p_1),r_1)\subset\Omega_{\lambda_0}$ and $q_1\in \parc B(\pi(p_1),r_1)$, for some $r_1>0$. Then, by Assertion \ref{ass:1} there exists a translation of the cylinder $\Gamma\times \R$ such that the graph of $u$ converges asymptotically to it as we approach $q_1$. Since the base curve of this translated cylinder cannot cross $\Omega_{\lambda_0}$ (by Assertion \ref{ass:3}), we conclude that this cylinder must be equal to $\Gamma_{\lambda_0}\times\r$. So, by Assertions \ref{ass:2} and \ref{ass:3}, and the extension process described in \eqref{extens}, the function $u$ converges to $-\8$ when we approach $\Gamma_{\landa_0}$ by points $q\in\Omega_{\lambda_0}$. But this contradicts the fact that $u$ is well defined in \eqref{entorno}, and in particular at the points of $\Gamma_{\lambda_0}$ that are sufficiently close to $\Gamma_{\lambda_0}\cap\Gamma$. This contradiction proves that $\landa_0=1$, and so $u$ is well defined as a graph in $\Omega_1$, i.e., in the interior region bounded by $\Gamma$. In particular, as $\Sigma$ is connected, we see that $\Sigma$ is the graph of a function $u$ over $\Omega_1$, as claimed. Moreover, $u\to +\8$ as we approach $\parc \Omega_1=\Gamma$, by the previous discussion. 

Let us next prove that this is not possible, by the maximum principle. Let $2\cW$ be the homothety of ratio $2$ of the Wulff shape; it has CAMC equal to $-1$ and, after an adequate translation, its vertical projection is equal to $2\overline{\Omega_1}$. Let $2\cW^-$ denote the set of points of $2\cW$ whose exterior unit normal does not point upwards; the projection of $2\cW^-$ is again $2\overline{\Omega_1}$.

Since $\pi(\Sigma)=\Omega_1$ and $u\to +\8$ as we approach $\parc \Omega_1$, it is clear that $2\cW^-$ lies strictly below $\Sigma$ after an adequate vertical translation. Thus, moving then $2\cW^-$ vertically upwards we will eventually reach an interior first contact point of $\Sigma$ with $2\cW^-$. This contradicts the maximum principle. Thus, Case 1 above cannot happen.

{\bf Case 2:} \emph{For every $p\in\Sigma$ and every $q_0\in\partial B(\pi(p),r_0)$ corresponding to it, the vector $n_{\Gamma}(0)$ in \eqref{oep} is the exterior unit normal to $\Gamma$ at $q_0$.}

Let us start by recalling our setting. Take $p\in\Sigma$, let $u(x,y)$ be the local function that parametrizes a neighborhood of $p$ as a graph, and let $r_0>0$ be such that $u$ is well defined in $B(\pi(p),r_0)$ but there is some $q_0\in\partial B(\pi(p),r_0)$ for which $u$ cannot be extended to a neighborhood of $q_0$. By previous arguments, there exists a closed convex planar curve $\Gamma_0$ that contains $q_0$ such that $\Gamma_0\times\r$ is a cylinder of CAMC $-1$ with respect to its outer normal, and so that  $u$ can be extended to an \emph{exterior} neighborhood of $q_0$ of the form \eqref{oep}; here we use the word \emph{exterior} to mean that, since in the present case $n_{\Gamma}(0)$ in \eqref{oep} is the exterior unit normal to $\Gamma_0$ at $q_0$, the domain of definition of $u$ around $q_0$ is contained in the exterior of the curve $\Gamma_0$. Also, by the extension process described before \eqref{entorno}, we know that we can extend $u$ to an exterior neighborhood of $\Gamma_0\backslash\{\tilde{q}_0\}$ like the one in \eqref{entorno}, where here $\tilde{q}_0$ denotes again the unique point in $\Gamma_0\backslash\{q_0\}$ with tangent line parallel to the one at $q_0$. We also know, by Assertions \ref{ass:2} and \ref{ass:3}, that $u\to -\8$ when we approach any point in $\Gamma_0\backslash\{\tilde{q}_0\}$.

Let $2\Gamma_0\subset\r^2$ denote the homothety of ratio $2$ of $\Gamma_0$, translated so that it is tangent to $\Gamma_0$ at $\tilde{q}_0$ and contains $\Gamma_0-\{\tilde{q}_0\}$ in its interior.

Let $H:[0,1]\times\s^1\fl\r^2$ be a smooth, one-to-one (homothopy) mapping given by the following properties (see Figure \ref{fig1}).

\begin{enumerate}
\item
For each $t\in [0,1]$, $H_t:=H(\cdot, t):\S^1\flecha \R^2$ is a regular parametrization of a curve $C_t\subset \R^2$ that is homothetic to $\Gamma_0$, in the sense that $C_t$ differs from $\Gamma_0$ by some homothety of $\R^2$ of ratio $\landa_t>0$, followed by some translation of $\R^2$.
 \item
$H_1(\S^1)=2\Gamma_0$.
 \item
$H_0(\S^1)$ is contained in $B(\pi(p_0),r_0)\cup \{q_0\}$, and it is tangent to $\parc B(\pi(p_0),r_0)$ at $q_0$.\end{enumerate}

\begin{figure}[h]
\begin{center}
\includegraphics[width=8cm]{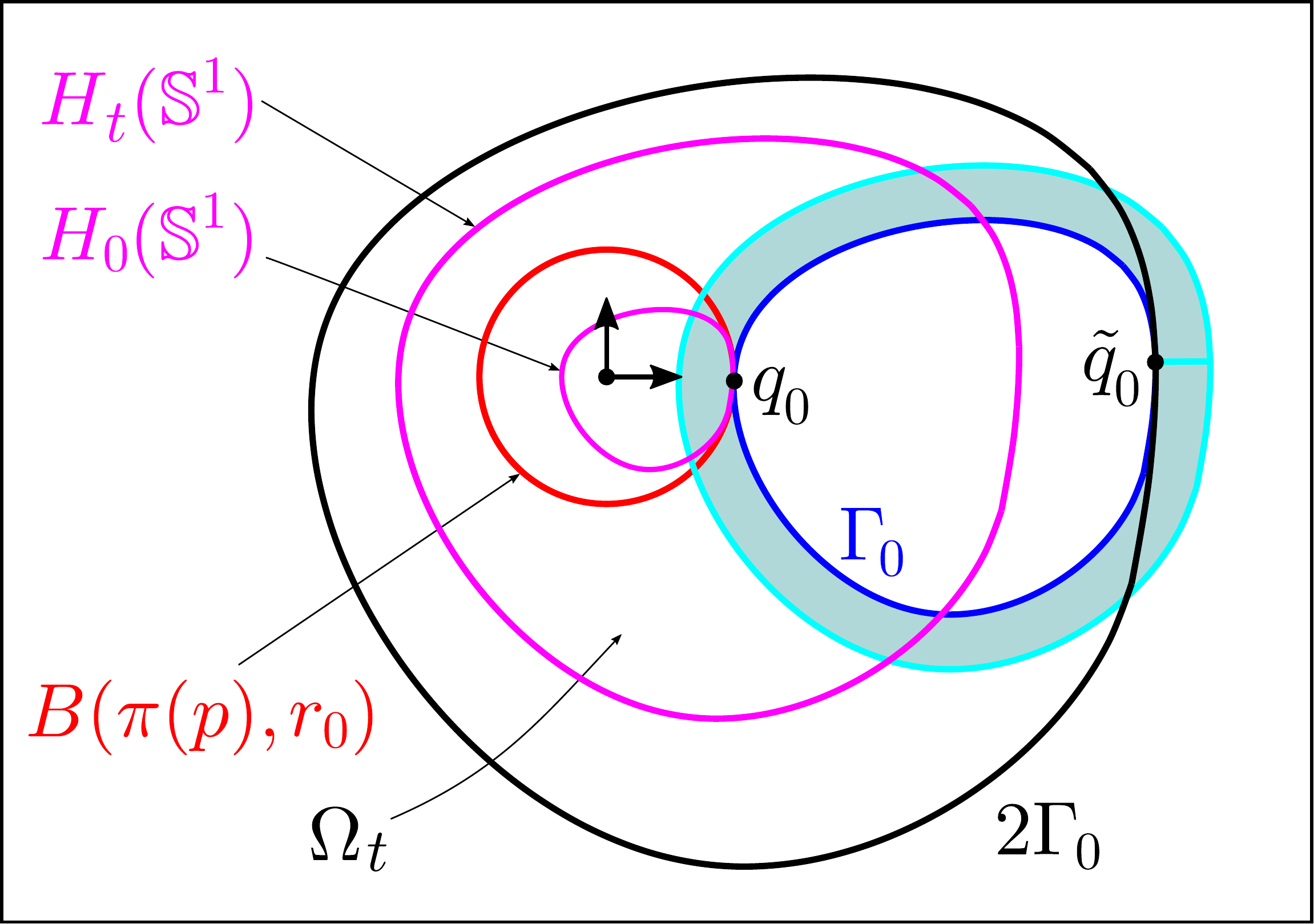}
\caption{Definition of the convex curve $H_t(\S^1)$ and the compact domain $\Omega_t$.} \label{fig1}
\end{center}
\end{figure}

For each $t\in[0,1]$, let $\Omega_t\subset\r^2$ be the compact convex domain bounded by $H_t(\S^1)$, and let $S_t\subset \Sigma$ be the connected component of $\Sigma\cap(\Omega_t\times\r)$ that contains $p$. Note that $S_{t_2} \subset S_{t_1}$ if $t_2<t_1$. Our next objective will be to prove that $S_1$ is a graph over some subset of $\Omega_1$, i.e. that $u$ can be extended to $\pi(S_1)\subset \Omega_1$.

To start, let $D_0$ denote the compact region of $\R^2$ bounded by $\Gamma_0$. Note that $\Omega_0\cap D_0=\{q_0\}$, and that $\Omega_t\cap D_0\neq \emptyset$, for all $t\in [0,1]$. By previous discussions, the function $u$ is well defined on $\tilde{\Omega}_0:=\Omega_0-\{q_0\}=\Omega_0-D_0$, and it holds that $u(q)\to -\8$ when $q\in \tilde{\Omega}_0$ approaches $q_0$.

We next analyze how this picture unfolds when we enlarge $\Omega_0$ to $\Omega_t$, for $t>0$. In order to do so, let $\mathcal{I}$ denote the set of values $t\in [0,1]$ for which:

\begin{enumerate}
\item[i)]
$S_t$ is the graph of (an extension of) $u$ over a subset $\tilde{\Omega}_t \subset \Omega_t$ given as
\begin{equation}\label{tildo}
\tilde{\Omega}_t=\Omega_t\backslash(D_0\cup\ldots\cup D_{m(t)}), 
\end{equation}
for some $m(t)\in\n\cup\{0\}$, where each $D_i\subset \R^2$ is the compact convex region bounded by some translation of the curve $\Gamma_0$, and all the $D_i\cap \Omega_t$ are pairwise disjoint (recall that $D_0$ is bounded by $\Gamma_0$). 
 \item[ii)]
$u(q)\to -\8$ when $q\in \tilde{\Omega}_t$ approaches $\parc D_0\cup \ldots \cup \parc D_m$.
\end{enumerate}

Note that in this definition, the disks $D_i$ are not contained in $\Omega_t$; they are only subject to the condition that $D_i\cap \Omega_t$ is non-empty (the intersection could be a single point, as it happens with the case $t=0$ explained above).

It is obvious by the previous discussion that $0\in{\cal I}$, and in that case $\tilde{\Omega}_0=\Omega_0-D_0$. Note that, since $S_{t_2}\subset S_{t_1}$ whenever $t_2<t_1$, it is also clear that if $t_1\in{\cal I}$ and $t_2\in[0,t_1]$, then $t_2\in{\cal I}$. Therefore, ${\cal I}$ is an interval of the form $[0,a)$ or $[0,a]$ for some $a\in(0,1]$. Moreover, the same domains $D_i$ appearing in the decomposition \eqref{tildo} of $\tilde{\Omega}_{t_2}$ will appear in the decomposition of $\tilde{\Omega}_{t_1}$, if $t_2<t_1$. In particular, the numbers $m(t)$ are non-decreasing with respect to $t$.

Assume next that ${\cal I}=[0,a]$, with $a\neq1$. Hence, there exists $m(a)\in\n\cup\{0\}$ so that $S_a$ is the graph of $u$ over
\begin{equation}\label{dominioss}
\tilde{\Omega}_a=\Omega_a\backslash(D_0\cup\ldots\cup D_{m(a)}).
\end{equation}
We want to prove that, for small values $\ep>0$, $S_{a+\ep}$ is a graph over $\Omega_{a+\ep}$ minus the same domains $D_0,\dots, D_{m(a)}$. Note that for any such $D_i$ we have that $\parc D_i\cap \parc \Omega_a$ either consists of two points, or $\parc D_i$ and $\parc \Omega_a$ are tangent at one point, and in that second situation they have opposite interior unit normals. Indeed, if their interior unit normals agreed at the intersection point, we would have $D_i\subset \Omega_a$ (it is impossible that $\Omega_a$ is contained in $D_i$, as we are in Case 2). But this condition implies that $D_i\subset \Omega_1$, and so $D_i\cap D_0\neq \emptyset$, since two translations of $\Gamma_0$ do not fit inside $2\Gamma_0$ without having an intersection point. And since $D_i\subset \Omega_a$, we have then that $D_i\cap D_0\cap \Omega_a\neq \emptyset$, which is a contradiction. 

So, once we have clarified the structure of each $\parc D_i\cap \parc \Omega_a$, the arguments of the first part of this proof show that $u$ can be extended to a small exterior tubular neighborhood of each curve $\parc D_i$, around their intersection points with $\parc \Omega_a$ (at most two points, for each such $D_i$). Note that we can choose such exterior tubular neighborhoods so small that they do not overlap each other (see Fig. \ref{fig2}). It is also clear that we can continue $u$ smoothly across the points in $\parc \Omega_a$ that do not lie in $\parc D_i$ for any $i$, since at those points $u$ is well defined. Therefore, for small values $\ep>0$, the domains $D_i$ are mutually disjoint inside $\Omega_{a+\ep}$, and $S_{a+\ep}$ is a graph over $\Omega_{a+\ep}\setminus(D_0\cup \cdots \cup D_{m(a)})$. In particular, this shows that there is some $\ep>0$ such that $[0,a+\ep)\subset \mathcal{I}$.

\begin{figure}[h]
\begin{center}
\includegraphics[width=8cm]{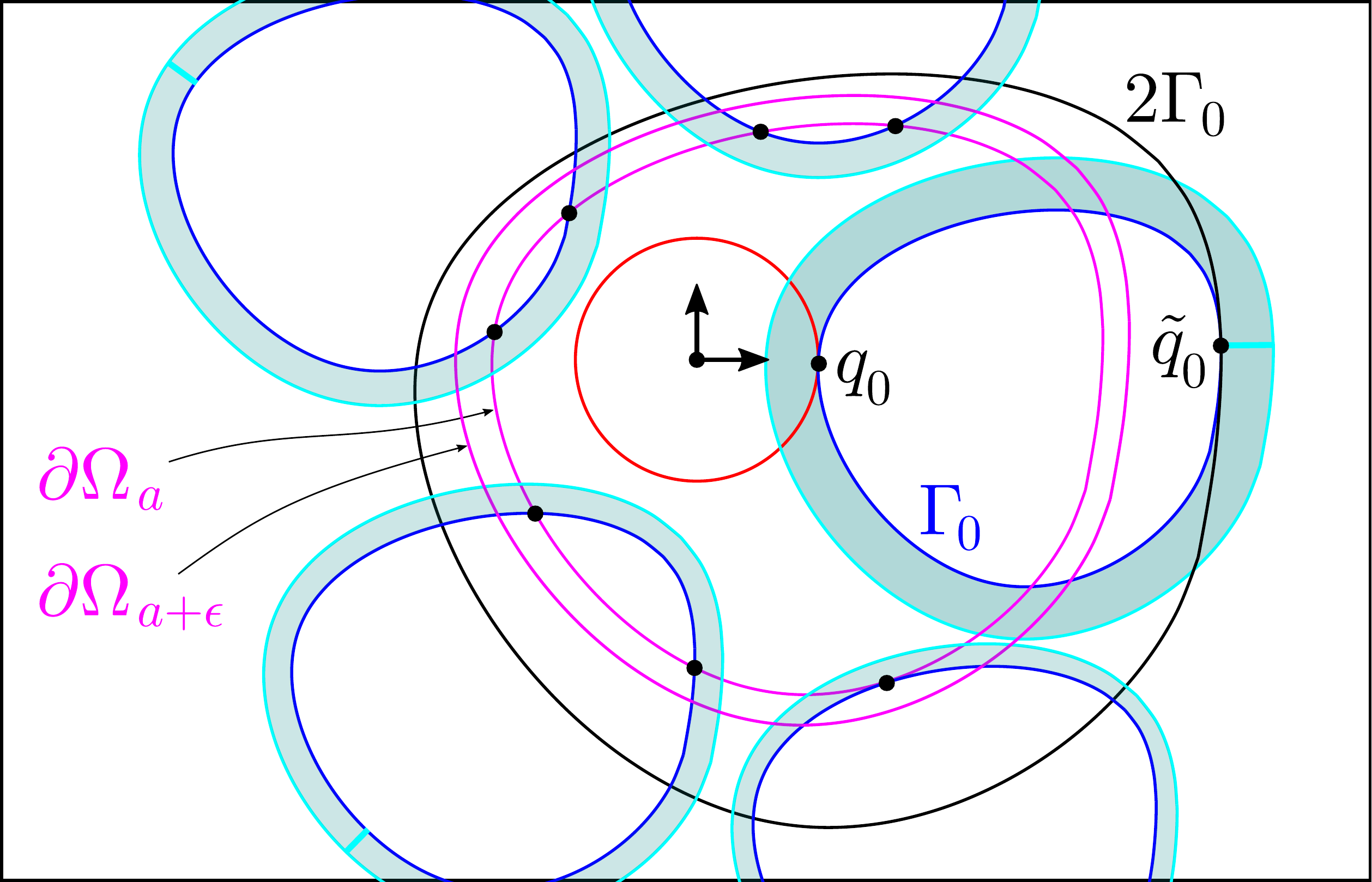}
\caption{Decomposition of the domain $\tilde{\Omega}_{a+\ep}$.} \label{fig2}
\end{center}
\end{figure}

Assume next that ${\cal I}=[0,a)$, with $0<a\leq1$. It is then clear that $S_a$ is a graph, since if two distinct points $p_1,p_2\in S_a$ satisfied $\pi(p_1)=\pi(p_2)$, and, since $\Sigma$ is a multigraph, there would exist $\tilde{p}_1,\tilde{p}_2\in S_b$ with $0<b<a$ such that $\pi(\tilde{p}_1)=\pi(\tilde{p}_2)$, and this would contradict that $b\in{\cal I}$. 

We will next show that the domain of $S_a$ is of the form explained in i), ii) above, what will prove that $a\in \mathcal{I}$. More specifically, we will prove that the domains $D_i$ appearing in the definition of $\tilde{\Omega}_a$ in \eqref{tildo} are the union of the domains $D_i$ which appear in the decomposition of $\tilde{\Omega}_b$, for all $b<a$ (we will show that there is only a finite number of such domains), and of a finite number of new domains  bounded by translations of $\Gamma_0$ that are tangent to $\parc \Omega_a$ on its concave side.

Given $b<a$, let us write 
\begin{equation}\label{decob}
\tilde{\Omega}_b=\Omega_b\backslash(D_1\cup\ldots\cup D_{m(b)}), 
\end{equation}
with $m(b)\in\n\cup\{0\}$. It was explained previously that, for any $t\in (b,a)$, the domains $D_i$ appearing in \eqref{decob} also appear in the decomposition \eqref{tildo} of $\tilde{\Omega}_t$ for that value $t$. In particular, all these $D_i\cap \Omega_t$ are disjoint, for all $t\in (b,a)$. Let us show that the domains $D_i\cap \Omega_a$ are also disjoint.

Assume first of all that all disks $D_i \neq D_0$ have points outside $\Omega_a$ (note that $D_0$ has points outside $\Omega_a$ except if $a=1$, in which case $D_0$ is tangent to $\Omega_1$ at $\tilde{q}_0$). In that case, again by the extension process in \eqref{extens}, the graph $u$ can be extended locally around any point in $\parc \Omega_a$ that belongs to some of the curves $\parc D_i$ to an exterior neighborhood of the point, so that $u\to -\8$ as it approaches $\parc D_i$. In particular, all the $D_i\cap \Omega_a$ (including $D_0$) are disjoint.

Assume next that the previous condition does not hold, i.e. there is some $D_j\neq D_0$ that is contained in $\Omega_a$. Note that $D_j\cap D_0\neq \emptyset$ since, again, two translated copies of $\Gamma_0$ that lie inside the compact region $\Omega_1$ bounded by $2\Gamma_0$ cannot be disjoint. Thus, $D_j\cap D_0$ is either one point at which $\parc D_j$ and $\parc D_0$ are externally tangent, or $\parc D_j\cap \parc D_0$ is a pair of points. In both cases, $\parc D_j\cap \parc D_0$ lies in $\parc \Omega_a$, since $D_j\cap D_0\cap \Omega_t$ is empty, for all $t<a$.

The case that $D_j\cap D_0$ is a single point in $\parc \Omega_a$ is impossible, due to the fact that, because $D_0$ and $D_j$ would be externally tangent at that point in this situation, this would force $D_0$ to lie in the exterior region of $\Omega_a$, and this is a contradiction with $D_0\cap \Omega_t\neq \emptyset$ for $t<a$.

On the other hand, if $\parc D_j\cap \parc D_0$ consists of two points in $\parc \Omega_a$, we would have that $\parc D_j\subset \Omega_a$ intersects $\parc \Omega_a$ \emph{tangentially} at two different points, and this is impossible unless $D_j=\Omega_a$, which cannot happen since $D_j\cap D_0\cap \Omega_t$ is empty for $t<a$.

This contradiction shows that the domains $D_i\cap \Omega_a$ are disjoint.

Consider next the map $t\mapsto m(t)$, which we know is non-decreasing. Let us show next that $m(t)$ is bounded as $t\to a^-$, i.e. that the total number of disks $D_i$ appearing in \eqref{decob} for all values $b\in (0,a)$ is finite.

Arguing by contradiction, assume that there exists a strictly increasing sequence
$\{t_n\}_n$ converging to $a$, and domains $D_{m(t_n)}$ (which arise in the decomposition of $\tilde{\Omega}_{t_n}$),
each of them bounded by a translation $\Gamma_{t_n}$ of $\Gamma_0$, and all of them pairwise disjoint inside $\Omega_a$ (by the argument above). We also can
suppose that the $\Gamma_{t_n}$ are tangent to $\parc \Omega_{t_n}$ on its concave side, and that 
$D_{m(t_n)}\cap \parc \Omega_a$ is a compact arc $J_n\subset \parc \Omega_a$, with endpoints $\{q_1^n,q_2^n\}$. These arcs are disjoint, and of arbitrary small length, taking $n$ sufficiently large.
In particular there exists an accumulation point $q^*\in \parc \Omega_a-\cup_{n\in \N} J_n$ of the sequence of pairs $\{q_1^n,q_ 2^n\}_n$.

Let $v^*\in \R^2$ denote the inner unit normal of $\parc \Omega_a$ at $q^*$, and consider the curve $\gamma^* :(0,1]\flecha \R^2$ given by $\gamma^*(s)=q^* +s v^*$, (see Figure \ref{fig3}). Note that $u$ is well defined along $\gamma^*$, for small values $s > 0$. 

\begin{figure}[h]
\begin{center}
\includegraphics[width=8cm]{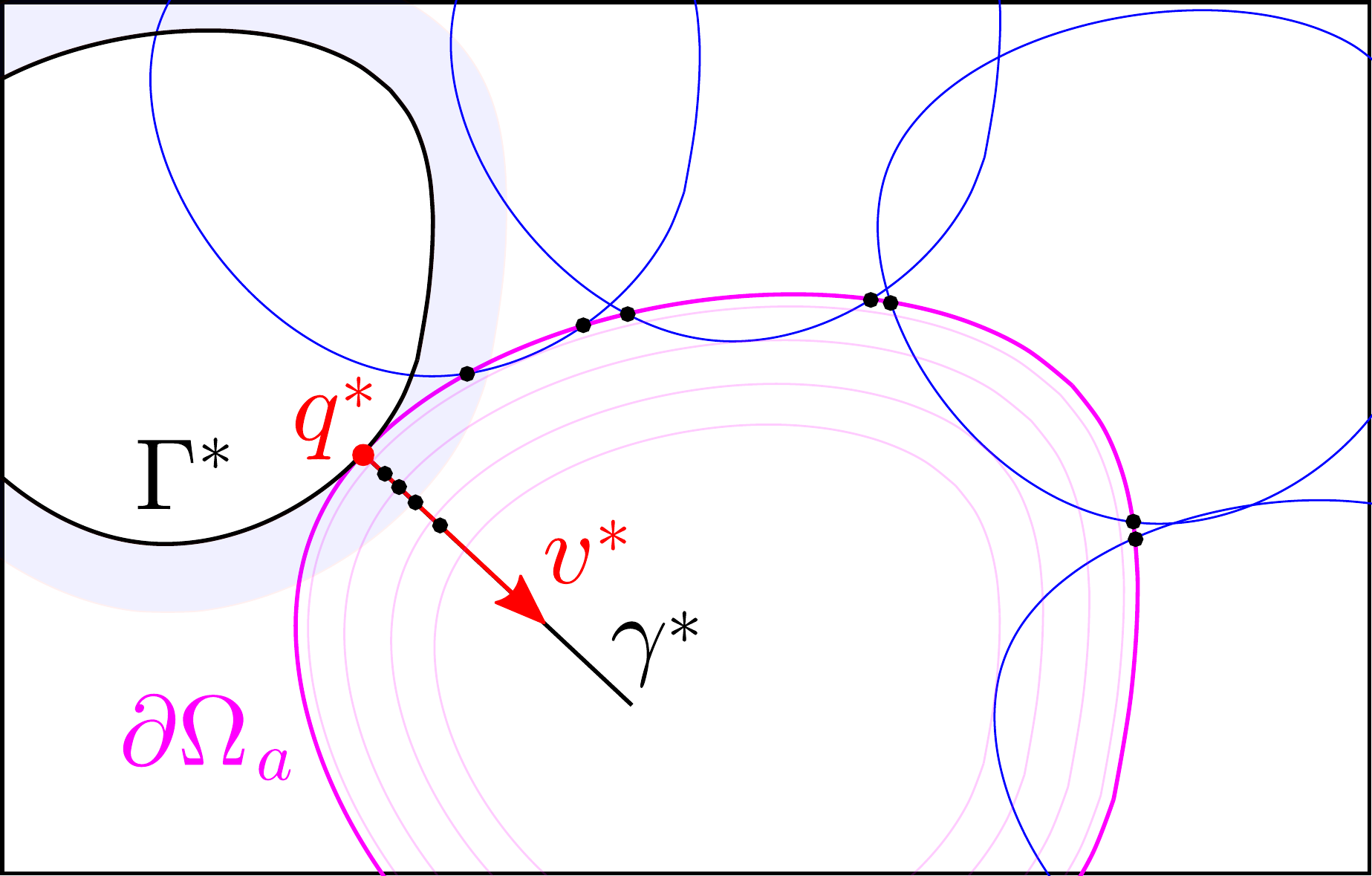}
\caption{The segment $\gamma^*(s)$} \label{fig3}
\end{center}
\end{figure}

In this situation, we can repeat the arguments of Assertions \ref{ass:1}, \ref{ass:2} and \ref{ass:3}, and prove that there exists a cylinder $\Gamma^*\times \R$, where $\Gamma^*$ is a translation of $\Gamma_0$, such that $\Gamma^*\times \R$ is tangent to $\parc \Omega_a$ at $q_*$ on its concave side, and for which $u$ can be extended at $q^*$ to an exterior tubular neighborhood of $\Gamma^*$ around $q^*$, so that $u\to -\8$ as $q$ approaches $\Gamma^*$ from its concave side. This shows, in particular, that $u(q)$ is well defined for any $q\in \Omega_a$ sufficiently close to $q^*$, what contradicts that $q^*$ is an accumulation point of $\{q_1^n,q_ 2^n\}_n$.

This contradiction proves that the total number of disks $D_i$ appearing in \eqref{decob} for all $b<a$ is a finite number $m$. Recall that we already showed that all the $D_i\cap \Omega_a$ are disjoint, and that every $D_i$, with the possible exception of $D_0$, intersects the exterior of $\Omega_a$.

Then, again by the extension process in \eqref{extens}, the graph $u$ can be extended locally around any point in $\parc \Omega_a$ that belongs to some of the curves $\parc D_i$ to an exterior neighborhood of the point, so that $u\to -\8$ as it approaches $\parc D_i$. 

Consider next a point $q_1\in \parc \Omega_a$ that does not lie in any of the compact disks $D_i$ for any $b<a$. Then, either $u$ extends smoothly across $q_1$ or, by Assertion \ref{ass:1} there is some translation $\Gamma'$ of $\Gamma_0$ such that $\Gamma'$ is tangent to $\parc \Omega_a$ at $q_1$ on its concave side. Moreover, $u\to -\8$ as we approach $q_1$, by Assertion \ref{ass:2}. Call $D'$ to the compact domain bounded by this curve $\Gamma'$.

We claim that two domains $D_1'$ and $D_2'$ constructed in this form cannot intersect inside $\Omega_a$. This follows again directly by the fact that we can extend the function $u$ along the exterior neighborhoods of $\parc D_i'$, $i=1,2$, with $u\to -\8$ along each $\parc D_i'$. Since $\parc \Omega_a$ is compact, there is then a finite number $k\geq 0$ of domains $D'$ obtained in this form.

Finally, all of this proves that $u$ can be extended to $$\tilde{\Omega}_a := \Omega_a \setminus (D_1\cup \dots \cup D_m\cup D_1'\cup \dots \cup D_k')$$ and properties i) and ii) above hold for this extension. In other words, $a\in \mathcal{I}$. Therefore, finally, $\mathcal{I}=[0,1]$.

All of this shows that $S_1$ is the graph of some function $u(x,y)$ defined on the domain
$$
\tilde{\Omega}_1=\Omega_1\backslash(D_1\cup\ldots\cup D_m),\qquad m\in\n.
$$
Moreover, $S_1$ is bounded from above, since $u(q)$ converges to $-\8$ as $q$ approaches $\partial D_i$, $i=1,\ldots,m$.

In this way, we argue as in Case 1; we consider a translation of $2{\cal W}$ whose projection agrees with the compact region bounded by $2\Gamma_0$ and lies above $S_1$. Then, translating $2 \cW$ vertically downwards, we find a first contact point $p_0\in S_1\cap 2{\cal W}$. The point $p_0$ cannot lie in $\partial S_1$, since in that case $\pi (p_0)$ would be a point of $2\Gamma_0$, and the points of $2{\cal W}$ whose projections lie in $2\Gamma_0$ have horizontal unit normal. Thus, $p_0$ is an interior point, and this contradicts the maximum principle.

This final contradiction shows that the complete multigraph $\Sigma$ with CAMC $-1$ cannot exist. This finally completes the proof of Theorem \ref{multigrafo}.
\end{proof}

As a direct consequence of Theorem \ref{multigrafo}, we obtain the desired extension to the anisotropic setting of the Hoffman-Osserman-Schoen theorem (Theorem A in the introduction):
\begin{corollary}\label{corolario1}
Let $\Sigma$ be a complete CAMC surface whose Gauss map image is contained in a closed hemisphere of $\S^2$. Then $\Sigma$ is a plane or a CAMC cylinder. 
\end{corollary}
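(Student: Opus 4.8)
The plan is to deduce Corollary~\ref{corolario1} from Theorem~\ref{multigrafo} by a limiting argument that reduces the closed-hemisphere hypothesis to the open-hemisphere (multigraph) situation. First I would normalize: up to an isometry of $\R^3$ we may assume the Gauss map image $N(\Sigma)$ lies in the closed lower hemisphere $\{N_3\le 0\}$, so that $N_3\le 0$ everywhere on $\Sigma$. If $N_3<0$ at every point, then $\Sigma$ is literally a multigraph over the $z=0$ plane, and Theorem~\ref{multigrafo} forces $\Sigma$ to be a plane, and we are done. The interesting case is therefore when $N_3$ vanishes somewhere, i.e. $N$ touches the equator.

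Next I would analyze the set $Z=\{p\in\Sigma : N_3(p)=0\}$. Since $\Sigma$ has CAMC and $N_3\le 0$, the function $N_3$ is a solution of a second-order elliptic PDE obtained by linearizing the CAMC equation \eqref{edp} (concretely, $N_3=\langle N,e_3\rangle$ satisfies an equation of the form $Lf=0$ for a suitable elliptic operator $L$ associated with the anisotropic Jacobi/stability operator, as used e.g. in \cite{KP}), so by the strong maximum principle either $N_3\equiv 0$ on $\Sigma$ or $N_3<0$ on the interior. If $N_3\equiv 0$, then the tangent planes of $\Sigma$ are all vertical, $\Sigma$ is invariant under the vertical translation flow, and $\Sigma=\Gamma\times\R$ for some complete curve $\Gamma\subset\R^2$; computing the anisotropic mean curvature of such a cylinder and using that it is constant forces $\Gamma$ to be (a piece of) the base curve $\cW_0$ of a CAMC cylinder, hence $\Sigma$ is a CAMC cylinder (or a plane, in the degenerate case $\Gamma$ a line). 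The remaining possibility, $N_3<0$ on all of $\Sigma$, again makes $\Sigma$ a multigraph over $z=0$, so Theorem~\ref{multigrafo} applies and $\Sigma$ is a plane.

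The main obstacle I anticipate is justifying cleanly the dichotomy for $N_3$: one must verify that $N_3$ genuinely satisfies a linear elliptic equation on the whole (abstract) surface $\Sigma$ — not just locally in graph coordinates — and that the strong maximum principle is applicable at interior points where $N_3$ attains the maximum value $0$. This requires writing the anisotropic stability operator intrinsically and checking its uniform ellipticity follows from \eqref{concon}; alternatively one can argue locally, noting that near any point where $N_3=0$ one can rotate coordinates so that $\Sigma$ is a graph in a different direction, apply the maximum principle there, and patch by connectedness. A secondary point is the classification of complete vertical cylinders with CAMC: one should check directly from the definition of anisotropic mean curvature that among planar curves $\Gamma$, constancy of the anisotropic mean curvature of $\Gamma\times\R$ singles out exactly the translates and rescalings of $\cW_0$, together with straight lines (giving planes). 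Both of these are short once set up, so the corollary follows.
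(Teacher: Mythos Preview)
Your proposal is correct and follows essentially the same approach as the paper: normalize so that $N_3\le 0$, invoke the dichotomy (either $N_3<0$ everywhere or $N_3\equiv 0$) via the strong maximum principle applied to the linearized CAMC equation (the paper simply cites \cite{KP} for this), and then apply Theorem~\ref{multigrafo} in the first case and read off a CAMC cylinder in the second. Your write-up is in fact more detailed than the paper's, which dispatches the corollary in three sentences.
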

\begin{proof}
By choosing suitable Euclidean coordinates $(x,y,z)$, we can assume that the third coordinate $N_3$ of the Gauss map $N$ of $\Sigma$ satisfies $N_3\leq 0$. In these conditions, it is well known that either $N_3<0$ everywhere, or else $N_3$ vanishes identically on $\Sigma$; see \cite{KP}. In the first case, we deduce from Theorem \ref{multigrafo} that $\Sigma$ is a plane. In the second case, $\Sigma$ is trivially a CAMC cylinder.
\end{proof}

As an interesting consequence of Theorem \ref{multigrafo}, we can deduce the anisotropic extension of the Klotz-Osserman theorem (Theorem B in the introduction).

\begin{theorem}\label{kno}
Let $\Sigma$ be a complete surface with non-zero CAMC, and whose Gaussian curvature does not change sign. Then $\Sigma$ is a CAMC cylinder or the Wulff shape, up to homotheties. 
\end{theorem}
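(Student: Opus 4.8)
The plan is to reduce the statement to Theorem \ref{multigrafo} (the multigraph classification) together with the classical Hilbert--Liebmann type rigidity for ovaloids. First I would split into the two sign cases for the Gaussian curvature $K$. Suppose first $K\le 0$ everywhere on $\Sigma$. The idea here is to show that the Gauss map image must then be contained in a closed hemisphere, so that Corollary \ref{corolario1} applies and forces $\Sigma$ to be a plane or a CAMC cylinder; since the anisotropic mean curvature is non-zero, the plane is excluded and we are left with a CAMC cylinder. To see the hemisphere property, recall that $K\le 0$ means $\kappa_1\kappa_2\le 0$, so the two principal curvatures have opposite signs (or one vanishes); by \eqref{1} the anisotropic principal curvatures $\lambda_1,\lambda_2$ are obtained from $S_p$ by composing with $(\mathcal S_{\nu(p)})^{-1}$, which is (up to sign) the inverse Weingarten map of the \emph{strictly convex} Wulff shape, hence positive definite and self-adjoint with respect to the appropriate metric. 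One must check that composing a symmetric operator of signature $(+,-)$ with a positive definite symmetric operator produces a diagonalizable operator whose eigenvalues $\lambda_1,\lambda_2$ again have opposite signs; this is a standard fact about the product of a symmetric indefinite form and a positive definite one. Since $H=\lambda_1+\lambda_2=H_0\ne 0$ is a fixed nonzero constant, $\lambda_1\lambda_2\le 0$ forces, say, $\lambda_1\ge 0\ge\lambda_2$ with $|\lambda_1|\ge|H_0|>0$; but what I actually want is the Gauss map constraint, so instead I would argue more directly: on an open set where $K<0$ the surface is locally a graph in a controlled way, and a completeness/convexity argument (in the spirit of the Efimov or Hilbert obstructions, but here only needing the multigraph conclusion) should show the normal stays in a hemisphere. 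If a cleaner route exists it is to observe that a complete surface with $K\le 0$ and bounded second fundamental form (which holds by Theorem \ref{curvatura} once we know the Gauss map omits an open set) whose anisotropic mean curvature is constant is either flat or its normal omits a neighborhood of some point; iterating gives the hemisphere condition.

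For the case $K\ge 0$ everywhere, the expectation is that $\Sigma$ must be compact and hence, by the anisotropic Hopf/Alexandrov-type results quoted in the introduction (or a direct argument), the Wulff shape up to homothety. The key step is to rule out non-compact examples: if $\Sigma$ is non-compact with $K\ge 0$, then it is a complete non-compact convex-type surface, and such a surface has its Gauss map image contained in a closed hemisphere (a standard fact for complete non-compact surfaces of non-negative Gaussian curvature — the image of the Gauss map of such a surface, when the surface is not a plane, is contained in a closed hemisphere, and if it is not compact it cannot be all of $\S^2$). Then Corollary \ref{corolario1} again applies, giving a plane (excluded) or a CAMC cylinder; but a CAMC cylinder has $K\equiv 0$ and that is consistent with $K\ge 0$, so I need an extra observation: either accept the CAMC cylinder as one of the allowed conclusions (it is — the theorem statement permits it), or note that if $K\ge 0$ and $K$ is not identically zero then $\Sigma$ is compact. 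So the real dichotomy in the $K\ge 0$ case is: if $K\equiv 0$ we are in the cylinder/plane situation handled above; if $K\not\equiv 0$ then $\Sigma$ is a complete surface with a point of positive Gaussian curvature and $K\ge 0$ everywhere, and one shows compactness (e.g. via a Bonnet--Myers-type argument once a lower curvature bound is available, or via the Hadamard-type theorem that a complete surface with $K\ge 0$ having a point with $K>0$ and proper Gauss map is an ovaloid), after which the anisotropic Hopf theorem of Koiso--Palmer \cite{KoisoPalmer} (genus zero) or He--Li--Ma--Ge \cite{CCC} (embedded), applied to this ovaloid, identifies it with $\cW$ up to homothety. Since an ovaloid is embedded and has genus zero, either cited result suffices.

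The main obstacle I anticipate is the passage from the pointwise Gaussian curvature sign to the global Gauss map / compactness statements in a form valid for \emph{immersed}, not embedded, complete surfaces: the classical ``non-negative curvature forces ovaloid or hemisphere image'' statements are usually phrased for embedded or for intrinsically complete surfaces, and some care (using that $|\sigma_\Sigma|$ is bounded, via Theorem \ref{curvatura}, precisely when the Gauss map omits an open set, so there is a potential circularity to break) is needed. I would handle this by first assuming the Gauss map image is \emph{not} contained in any closed hemisphere and deriving a contradiction: in the $K\le 0$ case this should contradict a Jenkins/Jörgens-type argument after passing to the anisotropic minimal-like structure, and in the $K\ge 0$ case a non-hemisphere Gauss image together with $K\ge 0$ forces the surface to be a compact ovaloid, reducing to the Hopf case. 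The other delicate point is verifying that the sign of $K$ (Euclidean Gaussian curvature) is what controls the sign of $\lambda_1\lambda_2$ versus the Euclidean $\kappa_1\kappa_2$; by \eqref{1}, $\det A_p=\det(\mathcal S_{\nu(p)})^{-1}\det S_p$, and $\det(\mathcal S_{\nu(p)})^{-1}>0$ because $\cW$ is strictly convex, so $\operatorname{sign}(\lambda_1\lambda_2)=\operatorname{sign}(\kappa_1\kappa_2)=\operatorname{sign}(K)$ — this makes the reduction clean and is the one computation I would actually write out.
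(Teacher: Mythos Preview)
Your treatment of the case $K\ge 0$ is essentially the paper's argument, though you should name the tool that does the heavy lifting for immersed surfaces: Sacksteder's theorem \cite{Sa}. It gives exactly the trichotomy you want (flat / embedded topological sphere / embedded boundary of a convex body), and in the third branch the convex-body conclusion immediately puts the Gauss map in a closed hemisphere so that Corollary \ref{corolario1} applies. Your worries about immersed versus embedded in this case are resolved precisely by quoting Sacksteder.

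The case $K\le 0$, however, has a real gap. You never produce a mechanism that forces the Gauss map image into a closed hemisphere, and the sketches you offer (Efimov/Hilbert-type obstructions, or bootstrapping through Theorem \ref{curvatura}) do not close: Theorem \ref{curvatura} needs the Gauss map to omit a disk as a \emph{hypothesis}, so you cannot use it to obtain that conclusion, and there is no general result saying that a complete surface with $K\le 0$ has its normal contained in a hemisphere (indeed this is false without further hypotheses). The paper does \emph{not} go through Corollary \ref{corolario1} here at all. Instead it extracts a uniform bound on one Euclidean principal curvature: writing $-1=H=a_{11}k_1+a_{22}k_2$ with $a_{ii}>0$ bounded above by $1/m$ (where $m>0$ is the minimum principal curvature of $\cW$) and using $k_1\le 0\le k_2$, one gets $k_1\le -m<0$ at every point. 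Then the Smyth--Xavier principal curvature theorem \cite{SX} is invoked: a complete surface in $\R^3$ with $\sup k_1<0$ (hence one principal curvature uniformly bounded away from zero) must be a cylinder. This is the missing idea; your observation that $\operatorname{sign}(\lambda_1\lambda_2)=\operatorname{sign}(K)$ is correct and related in spirit, but the decisive step is the uniform bound on a \emph{Euclidean} principal curvature followed by \cite{SX}, not a hemisphere argument.
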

\begin{proof}
Assume that the Gaussian curvature $K$ of the immersed surface $\Sigma$ is non-negative. Then, by Sacksteder classical theorem \cite{Sa}, there are three options:
\begin{enumerate}
\item
$K$ vanishes identically on $\Sigma$.
 \item
$\Sigma$ is an embedded surface diffeomorphic to $\S^2$.
 \item
$\Sigma$ is complete, non-compact, embedded, and the boundary of some convex set of $\R^3$.
\end{enumerate}

In the first case, $\Sigma$ is a CAMC cylinder. In the second case, $\Sigma$ is the Wulff shape, by \cite{KoisoPalmer} (see also \cite{CCC}). In the third case, the Gauss map image of $\Sigma$ lies in a closed hemisphere of $\S^2$ (by convexity). So, by Corollary \ref{corolario1}, $\Sigma$ is again a CAMC cylinder.

Assume now that $K\leq0$ at every point. Up to ambient homothety, we can assume that the anisotropic mean curvature $H$ of $\Sigma$ is $H=-1$. Let $p\in\Sigma$, and $\{e_1,e_2\}$ be an orthonormal basis of $T_p\Sigma$ given by principal directions, i.e. the (Euclidean) Weingarten endomorphism of $\Sigma$ at $p$ is written as
$$
S_p(e_i)=k_i\,e_i,\qquad i=1,2,
$$
where $k_1,k_2$ are the (Euclidean) principal curvatures of $\Sigma$ at $p$. The anisotropic mean curvature of $\Sigma$ is given by the trace of \eqref{1}, and so we have
$$
-1=H(p)=a_{11}k_1+a_{22}k_2,
$$
where $a_{ii}=\langle (S_{\nu(p)})^{-1}(e_i),e_i\rangle>0$, $i=1,2$. Since $k_1k_2\leq0$, we may assume that $k_1\leq0\leq k_2$. Then, if we let $m$ denote the minimum value of the two (positive) principal curvatures of the Wulff shape $\cW$, we have
$$
-1=H(p)\geq a_{11} \,k_1 \geq \frac{1}{m} \, k_1.
$$
In this way, at every $p\in\Sigma$, the principal curvatures $k_1,k_2$ satisfy $k_1\leq -m<0\leq  k_2$. Once here, since $\Sigma$ is complete and the supremum of $k_1$ is negative, we conclude from \cite[The principal curvature theorem]{SX} that $\Sigma$ is a cylinder. This completes the proof.
\end{proof}

\section{Height estimates and properly embedded CAMC surfaces}\label{sec:4}

In this section we will derive some further consequences of Theorem \ref{multigrafo}, applied to the study of CAMC surfaces properly embedded in $\R^3$. We start with the following height estimate for compact graphs of CAMC and planar boundary.

\begin{lemma}\label{compactas}
Given $H_0\neq0$, there exists a constant $C(H_0)>0$ such that the following assertion holds:

Let $P\subset \R^3$ be any plane, and $\Sigma$ be any immersed compact surface in $\R^3$ with $\partial \Sigma \subset P$, so that $\Sigma$ has CAMC $H_0$, and is a multigraph over $P$ (i.e. no tangent plane of $\Sigma$ is orthogonal to $P$). Then, the distance of any point of $\Sigma$ to $P$ is at most $C(H_0)$.
\end{lemma}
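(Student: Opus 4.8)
The plan is to prove Lemma \ref{compactas} by the standard ``half-sphere'' comparison argument adapted to the Wulff shape, exploiting the fact that translated copies of a suitable homothety of $\cW$ serve as barriers for CAMC $H_0$. First I would reduce to $H_0 = -1$: by the homothety behaviour of the anisotropic mean curvature recalled in Section \ref{sec:2}, a homothety of ratio $c$ scales $H_0$ by $1/c$ and multiplies heights by $c$, so a bound $C(-1)$ for the normalized problem yields $C(H_0) = C(-1)/|H_0|$ in general. After this reduction, up to a rigid motion I may assume $P = \{z = 0\}$ and, since $\Sigma$ is a multigraph over $P$ with $\partial\Sigma\subset P$, that $\Sigma$ lies (say) in the closed upper half-space near its highest points; the sign of $H_0$ together with the multigraph condition pins down on which side of $P$ the interior of $\Sigma$ sits.

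The core step is the barrier. Let $2\cW$ denote the homothety of the Wulff shape of ratio $2$, which has CAMC $-1$, and let $2\cW^-$ be the ``lower cap'' consisting of the points of $2\cW$ whose exterior unit normal does not point upward; its vertical projection is a convex planar domain, and its height (vertical diameter of this cap) is some explicit constant $C(-1)$ depending only on $F$. Given $\Sigma$ as in the statement, suppose for contradiction that some point $p^\star\in\Sigma$ has $\mathrm{dist}(p^\star,P) > C(-1)$. Place a vertical translate of $2\cW^-$ far above $\Sigma$ (possible since $\Sigma$ is compact) and slide it downward. Because $\mathrm{height}(2\cW^-) = C(-1)$ is strictly less than the height of $p^\star$, and because $\partial\Sigma\subset P$, a first interior contact point $p_0$ between $2\cW^-$ and $\Sigma$ must occur before $2\cW^-$ reaches the plane $P$ — in particular $p_0\notin\partial\Sigma$. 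At $p_0$ the two surfaces are tangent, $\Sigma$ lies locally on one side of $2\cW^-$, and they carry compatible orientations with CAMC $-1$, so the Hopf maximum principle for the quasilinear elliptic equation \eqref{edp} forces $\Sigma$ and $2\cW^-$ to coincide near $p_0$; propagating this along $\Sigma$ (connected) gives $\Sigma\subset 2\cW$, contradicting $\partial\Sigma\subset P$ (the boundary points of $2\cW^-$ have horizontal unit normal, yet $\partial\Sigma\subset P=\{z=0\}$ would have to be reached from above). Hence no such $p^\star$ exists and the claimed estimate holds with $C(H_0) := C(-1)/|H_0|$.

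The step I expect to be the main obstacle is making the geometric sliding argument fully rigorous in the \emph{multigraph} (non-embedded) setting: one has to be careful that ``first contact'' is genuinely an interior contact and that the one-sided comparison needed to invoke the maximum principle is valid. The point is that near its topmost region $\Sigma$ is locally a graph over $P$ (the multigraph hypothesis forbids vertical tangent planes), so both $\Sigma$ and $2\cW^-$ can be written as graphs $z = u(x,y)$, $z = w(x,y)$ over a common small disk around $\pi(p_0)$, with $u\le w$ and $u(\pi(p_0)) = w(\pi(p_0))$; then $v := w - u \ge 0$ satisfies a linear elliptic equation with no zero-order term (the difference of two solutions of \eqref{edp}), so by the strong maximum principle $v\equiv 0$ there, and a standard open–closed connectedness argument on $\Sigma$ extends the coincidence globally. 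The rest — locating the first contact height, checking the boundary point is excluded, and tracking the orientation/sign conventions so that the inequality $u\le w$ goes the right way — is routine once the normalization $H_0 = -1$ and the description of $2\cW^-$ are in place.
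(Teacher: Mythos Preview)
There is a genuine gap in the orientation step, which you dismiss as ``routine'' but is in fact the central obstacle. After normalizing to $H_0=-1$ and arranging $\Sigma$ to lie, say, above $P=\{z=0\}$, the absence of an interior local minimum of the height forces $N_3>0$ on $\Sigma$. But the barrier $2\cW^-$ has CAMC equal to $-1$ \emph{only with respect to its exterior normal}, which on the lower cap satisfies $N_3<0$. Thus at the first contact point the two surfaces are tangent with \emph{opposite} oriented normals. With the upward normal (the one matching $\Sigma$), the anisotropic mean curvature of $2\cW^-$ is in general not even constant --- the paper explicitly notes that ``the anisotropic mean curvature of $\cW$ with respect to its interior unit normal is not, in general, constant.'' So $u$ and $w$ do not satisfy the same equation \eqref{edp}, and the Hopf comparison you invoke does not force local coincidence. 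Unlike the isotropic case, reversing $N\mapsto -N$ does not send $H\mapsto -H$ for a general $F$, so there is no cheap fix by flipping signs; your barrier would work only under an extra symmetry hypothesis on $F$ (e.g.\ $F$ even) that the lemma does not assume.

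This is exactly why the paper's proof takes a completely different route. It argues by contradiction with a sequence $\Sigma_n$ of such multigraphs whose heights diverge, uses the curvature estimate (Theorem~\ref{curvatura}) to bound the second fundamental forms uniformly, and then applies the compactness result (Theorem~\ref{compacidad}) to extract a complete limit $\Sigma_0$ with CAMC $H_0$ whose Gauss map lies in a closed hemisphere. Corollary~\ref{corolario1} (hence ultimately the hard Bernstein-type Theorem~\ref{multigrafo}) then forces $\Sigma_0$ to be a CAMC cylinder, contradicting that the limit normal at the chosen basepoint is orthogonal to $P$. In short, the height estimate here is a \emph{consequence} of Theorem~\ref{multigrafo}, not an elementary fact obtainable by a Wulff-shape sliding barrier.
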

\begin{proof}
Arguing by contradiction, assume that there exists a sequence of oriented mutigraphs $\Sigma_n$ with CAMC $H_0$ over planes $P_n\subset \R^3$, with $\partial\Sigma_n\subset P_n$, and points $q_n\in\Sigma_n$ whose distance to $P_n$ is greater than $n$. 

Let $v_n\in\s^2$ be the normal vector to $P_n$ such that $\esiz N_n,v_n\esde >0$, where $N_n$ is the unit normal of $\Sigma_n$. In particular, the unit normal of $\Sigma_n$ at any $p_n\in \Sigma_n$ that lies at a maximum distance from $P_n$ is equal to $v_n$. Up to suitable translations, we can assume that $p_n$ is the origin of $\R^3$, for every $n$. Also, up to a subsequence, we will assume that $\{v_n\}$ converges to some $v_0\in \S^2$. 

By Theorem \ref{curvatura} for the choice $d=1$, we obtain the existence of a constant $C>0$ such that the norm of the second fundamental form of $\Sigma_n$ is bounded by $C$ for any $n$ and any point of $\Sigma_n$ whose distance to $\parc \Sigma_n$ is greater than $1$. In this way, since the distance of $p_n$ to $\partial \Sigma_n$ diverges to $\8$, it follows from Theorem \ref{compacidad} that a subsequence of the $\{\Sigma_n\}$ converges in the ${\cal C}^2$ topology to a complete, possibly not connected, surface $\Sigma_0$ with CAMC $H_0$, that passes through the origin, with unit normal equal to $v_0$ at that point.

Since $\esiz N_n,v_n\esde>0$, we deduce that $\esiz N_0,v_0\esde \geq 0$, where $N_0$ is the unit normal of $\Sigma_0$. It follows then from Corollary \ref{corolario1} that $\Sigma_0$ is a cylinder, and so $\esiz N_0,v_0\esde$ vanishes identically. This contradicts that $N_0=v_0$ at the origin.
\end{proof}

To obtain a general height estimate for non-compact graphs with planar boundary, we will next adapt to the CAMC case a result by Meeks (cf. \cite[Lemma 2.4]{Me}) for the isotropic (CMC) case. We will only sketch the proof, following a slightly simplified version of Meeks' proof appearing in \cite[Theorem 4]{AEG} or \cite[Theorem 6.2]{Espinar}.

\begin{lemma}\label{meeks}
Let $P\subset\r^3$ be a plane and $\Omega\subset P$ a closed (not necessarily bounded) domain. Let $\Sigma\subset \r^3$ be a normal graph over $\Omega$ of some function $u$, so that $\Sigma$ has CAMC $H_0\neq0$, and $u=0$ on $\partial\Omega\subset P$. 
Let $d_{{\cal W}}$ be the extrinsic diameter of the Wulff shape ${\cal W}$, denote $d_0=2\sqrt{3}d_{{\cal W}}$, and let $P_t$ denote the two parallel planes to $P$ at a distance $t>0$. 

Then, for any $t>d_0/|H_0|$, the extrinsic diameter of each connected component of $\Sigma\cap P_t$ is at most $d_0/|H_0|$. 
In particular, all connected components of $\Sigma\cap P_t$ are compact for $t>d_0/|H_0|$.
\end{lemma}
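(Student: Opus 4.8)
The plan is to adapt Meeks' balancing/flux argument to the anisotropic setting, using the CAMC cylinder construction and Theorem \ref{multigrafo} in place of Meeks' sphere comparisons. I would argue by contradiction: suppose for some $t > d_0/|H_0|$ there is a connected component $\alpha$ of $\Sigma\cap P_t$ whose extrinsic diameter exceeds $d_0/|H_0|$. After a homothety of $\R^3$, I may normalize $H_0 = -2$, so that the relevant comparison surface is the Wulff shape $\cW$ itself (with its exterior normal) and the CAMC cylinders built over the width-$d_{\cW}$ slabs of $\cW$.

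First, I would set up the comparison geometry. Pick two points $P_1, P_2 \in \alpha$ realizing a distance larger than $d_0/|H_0| = \sqrt3\, d_{\cW}$, and consider the portion $\Sigma'$ of $\Sigma$ lying above $P_t$ (i.e., on the same side as the graphing direction, farther from $P$). Since $u = 0$ on $\partial\Omega$, the boundary of $\Sigma'$ consists of arcs in $P_t$, namely components of $\Sigma\cap P_t$. The key elementary fact is that a CAMC cylinder $\cC_{v_0}$ (CAMC $=-1$ for its exterior normal, hence $=-2$ after the dilation by $2$ used throughout Section \ref{sec:3}) cannot be ``hidden'' in a thin slab: its base curve is a translate of a width-$d_{\cW}$ section of $\cW$, and applying the same vertical-translation-to-first-contact trick as in the proof of Theorem \ref{multigrafo}, I would slide a suitably oriented CAMC cylinder (with axis parallel to $P$ and to the chord $P_2 - P_1$) toward $\Sigma'$ from outside; the maximum principle then forces an interior contact unless $\Sigma'$ stays within a bounded neighborhood of the cylinder, which caps the transverse extent of $\alpha$ by a multiple of $d_{\cW}$. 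The factor $2\sqrt3$ in $d_0$ is exactly what one gets by combining the two transverse directions (the $\sqrt3$) with the dilation-by-$2$ normalization; this bookkeeping is routine once the comparison is set up.

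More precisely, the cleanest route is: project $\alpha$ to $P$ and let $L$ be the planar diameter of $\pi(\alpha)$; if $L > d_0/|H_0|$ I want a contradiction. I would use the fact that $\Sigma$ is a multigraph over $P$, so $\Sigma\cap P_t$ is a $1$-manifold transverse to the graphing direction, and each component bounds (together with part of $\partial\Sigma\subset P$) a sub-multigraph. For such a sub-multigraph sitting in the slab between $P$ and $P_t$, I apply Lemma \ref{compactas}: a \emph{compact} multigraph of CAMC $H_0$ with planar boundary has height at most $C(H_0)$. The content of the present lemma, relative to Lemma \ref{compactas}, is to make the constant explicit ($d_0/|H_0|$ rather than an abstract $C(H_0)$) and, crucially, to bound the \emph{diameter of the level slices} rather than just the height — this is what yields compactness of the components of $\Sigma\cap P_t$. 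To get the diameter bound I would foliate a neighborhood of $\alpha$ by CAMC cylinders with horizontal axes in varying directions and use each as a barrier: if the slice $\alpha$ extended past distance $d_{\cW}$ (in the dilated picture, $2d_{\cW}$) transverse to some chord, a translated CAMC cylinder placed outside $\Sigma'$ and pushed in would hit an interior point of $\Sigma'$ with the wrong comparison of normals, contradicting the maximum principle exactly as in Cases 1 and 2 of Theorem \ref{multigrafo}.

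\textbf{Main obstacle.} The hard part will be the same subtlety that dominates the proof of Theorem \ref{multigrafo}: because $\Sigma$ is only a multigraph and not a graph, one cannot directly assert that a connected piece of $\Sigma$ lying ``above'' a slice of $P_t$ is embedded over its projection, so the barrier comparison with CAMC cylinders must be carried out locally, following curves $\gamma_0(t)$ toward the points where the graph breaks down, and invoking Assertions \ref{ass:1}--\ref{ass:3} to control the asymptotics. In particular one must rule out that the relevant component of $\Sigma\cap P_t$ spirals or self-accumulates in a way that defeats the finite-diameter conclusion; this is handled exactly as the finiteness of the number of domains $D_i$ was handled in Case 2, via an accumulation-point argument producing a limit CAMC cylinder tangent to the boundary from the concave side, which is then excluded. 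Once that local-to-global passage is in place, the explicit constant $d_0 = 2\sqrt3\, d_{\cW}$ drops out of the dimensions of the Wulff shape and the dilation normalization, and the final sentence (compactness of components of $\Sigma\cap P_t$ for $t > d_0/|H_0|$) is immediate, since a closed $1$-manifold of bounded diameter in a plane is compact.
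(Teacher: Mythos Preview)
Your proposal has a genuine gap, rooted in a misreading of the hypotheses. The lemma assumes $\Sigma$ is a \emph{normal graph} of a function $u$ over $\Omega\subset P$, not merely a multigraph. So your entire ``main obstacle'' paragraph --- the worry about spiraling, self-accumulation, and the need to import Assertions \ref{ass:1}--\ref{ass:3} from the proof of Theorem \ref{multigrafo} --- is addressing a difficulty that is not present. Likewise, Theorem \ref{multigrafo} and the CAMC cylinder asymptotics play no role here.

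Because of this misreading, you never arrive at the actual mechanism, which is much more elementary and uses the Wulff shape $\cW$ itself (not CAMC cylinders) as the barrier. After normalizing $H_0=-2$ one first shows $u$ has a sign (slide a copy of $\cW$ down from above; the orientation at first contact fixes the sign of $u$), say $u\ge 0$. Now suppose a component of $\Sigma\cap\{z=t\}$ has diameter $>\sqrt{3}\,d_{\cW}$ for some $t>\sqrt{3}\,d_{\cW}$. Choose an arc $\Gamma\subset\Omega$ with $u\ge t$ on $\Gamma$ and endpoints at distance $>\sqrt{3}\,d_{\cW}$. The ``wall'' $S=\Gamma\times[0,t]$ lies in the open region $\{0<z<u\}$ beneath the graph, and it divides the slab $\cR=\{|x|\le x_0,\ 0\le z\le t\}$ into two pieces, each large enough to contain a copy of $\cW$ (this is where the constant enters: $\cW$ sits in a ball of diameter $\sqrt{3}\,d_{\cW}$, and both the height $t$ and the width $2x_0$ of $\cR$ exceed that). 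One then slides $\cW$ continuously through $S$ from one side to the other, staying inside $\cR$; the piece of $\cW$ that has crossed $S$ can never touch $\Sigma$ by the maximum principle, so $\cW$ ends up entirely beneath $\Sigma$ on the far side, and translating it upward produces an interior contact --- contradiction.

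Your explanation of the constant $d_0=2\sqrt{3}\,d_{\cW}$ is also off: the $\sqrt{3}$ is not from ``combining two transverse directions'' but from the crude fact that a set of diameter $d_{\cW}$ fits in a ball of diameter $\sqrt{3}\,d_{\cW}$; the factor $2$ is simply the normalization $|H_0|=2$ (so $d_0/|H_0|=\sqrt{3}\,d_{\cW}$). Your proposed cylinder-barrier argument, as written, never produces a clean interior contact: a CAMC cylinder pushed in from outside would first meet $\Sigma$ along $\partial\Sigma\subset P$ or along $\Sigma\cap P_t$, not at an interior point, so the maximum principle does not apply in the way you suggest.
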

\begin{proof}
Up to ambient homothety, we will assume that the CAMC of $\Sigma$ is $H_0=-2$. Let us also remember that the Wulff shape ${\cal W}$ has CAMC equal to $-2$ for the choice of its exterior unit normal. Also, since ${\cal W}$ is a compact set of diameter $d_{{\cal W}}$, it is contained in some closed ball of $\R^3$ of diameter $\sqrt{3}\,d_{{\cal W}}$.

Take now Euclidean coordinates $(x,y,z)$ in $\R^3$ so that the plane $P$ is the $z=0$ plane. Thus, $\Sigma$ is given as a graph $z=u(x,y)$ over $\Omega\subset \R^2$, with $u=0$ on $\parc \Omega$. 

Assume that there exists $p_1\in\Omega$ so that $u(p_1)>0$. Then, we can translate $\cW$ horizontally so that its lowest point projects vertically to $p_1$, and then move $\cW$ vertically upwards until it is placed above the graph of $u$ over the compact set $\overline{\Omega}\cap B_R$, where $B_R\subset\r^2$ is a disk centered at $p_1$ and of radius $R>d_{{\cal W}}$. Once here, we can translate $\cW$ vertically downwards until it reaches a first contact point with $\Sigma$. Then, the unit normal $N:\Sigma\flecha \S^2$ of $\Sigma$ must point upwards, since otherwise we would have $\cW=\Sigma$, a contradiction. A similar argument proves that if $u(p_2)<0$ for some $p_2\in \Sigma$, then $N$ points downwards. Thus, $u$ cannot change sign, and we can assume without loss of generality that $u\geq 0$ and that $N$ points upwards. 

Choose now any $t\in \R$ with $t>d_0/|H_0|$, which by our initial normalization $H_0=-2$ means $t>\sqrt{3}\,d_{{\cal W}}$. We will suppose that the plane $z=t$ intersects $\Sigma$ transversally (this happens for almost every $t$, by Sard's theorem), and assume by contradiction that there exists a connected component of $\Sigma\cap\{z=t\}$ with diameter greater than $\sqrt{3}\,d_{{\cal W}}$. Take a simple arc $\Gamma\subset\Omega$ so that the maximum Euclidean distance between its endpoints $p_1,p_2\in\r^2$ is greater than $\sqrt{3}\,d_{{\cal W}}$, with $u(p)\geq t$ for all $p\in \Gamma$. We may choose $\Gamma$ so that the Euclidean distance between $p_1$ and $p_2$ is not smaller than the distance between any other two points of  $\Gamma$. Up to an isometric change of coordinates in $\R^3$ given by a rotation around the $z$-axis and a horizontal translation, we may take $p_1=(-x_0,0),p_2=(x_0,0)$, with $x_0>\sqrt{3}\,d_{{\cal W}}/2$.

In this way, the \emph{rectangular surface} (with boundary) $S:=\Gamma\times[0,t]$ is contained in 
$$
{\cal U}=\{(x,y,z)\in\r^3:\ (x,y)\in\Omega, \ 0<z<u(x,y)\}.
$$
In fact, $S$ divides the solid region
$$
{\cal R}=\{(x,y,z)\in\r^3:\ |x|\leq x_0,\ 0\leq z\leq t\}
$$
into two connected components ${\cal R}_1,{\cal R}_2$.

Once here, we can place ${\cal W}$ inside the interior of ${\cal R}_1$, and then move it continuously towards ${\cal R}_2$ without leaving the interior of ${\cal R}$. Let us only consider the piece of $\cW$ that passes through $S$ into the inside of $\mathcal{R}_2$ by means of this continuous translation process. It is clear that this piece of surface cannot touch $\Sigma$, by the maximum principle, since otherwise $\Sigma=\cW$, which is impossible. Hence, $\cW$ completely passes through $S$ by this translation process, until it ends up being contained in ${\cal R}_2\cap{\cal U}$. But once there, we could move $\cW$ vertically upwards until reaching a first contact point with $\Sigma$, what gives again a contradiction with the maximum principle.
\end{proof}

As a direct consequence of the previous two lemmas, we have:

\begin{theorem}\label{meeks2}
For any $H_0\neq0$ there exist a constant $D(H_0)>0$ so that the following assertion holds:

Let $P\subset \R^3$ be any plane, and $\Omega\subset P$ any closed (not necessarily bounded) domain. Let $\Sigma\subset \r^3$ be a normal graph over $\Omega$ of some function $u$, so that $\Sigma$ has CAMC $H_0\neq0$, and $u=0$ on $\partial\Omega\subset P$. Then, the distance of any point of $\Sigma$ to $P$ is at most $D(H_0)$.
\end{theorem}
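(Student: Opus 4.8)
The plan is to combine Lemma~\ref{compactas} (a height bound for \emph{compact} CAMC multigraphs with planar boundary) with Lemma~\ref{meeks} (compactness and diameter control for the horizontal slices $\Sigma\cap P_t$, $t$ large), by cutting $\Sigma$ along such a slice and recognizing the piece that sits above it as a compact multigraph over the horizontal plane $P_t$. First I would normalize: since the hypotheses of Theorem~\ref{meeks2} are exactly those of Lemma~\ref{meeks}, the Wulff-shape barrier argument in the proof of that lemma shows the defining function $u$ does not change sign, so after an isometry of $\R^3$ we may assume $P=\{z=0\}$ and $u\ge 0$, and then the distance of a point of $\Sigma$ to $P$ equals its $z$-coordinate. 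Set $D(H_0):=C(H_0)+d_0/|H_0|+1$, where $C(H_0)$ is the constant of Lemma~\ref{compactas} and $d_0=2\sqrt{3}\,d_{\cW}$ as in Lemma~\ref{meeks}. Arguing by contradiction, suppose some $q\in\Sigma$ has $z(q)>D(H_0)$.

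By Sard's theorem, the height function $z|_\Sigma$ has a regular value $t$ with $d_0/|H_0|<t<d_0/|H_0|+1$; for this $t$ the plane $P_t=\{z=t\}$ is transverse to $\Sigma$, so $\Sigma\cap P_t$ is a properly embedded $1$-submanifold of $\Sigma$ whose (locally finitely many) connected components are, by Lemma~\ref{meeks}, compact curves of extrinsic diameter at most $d_0/|H_0|$. Let $K$ be the connected component of $\{p\in\Sigma:z(p)\ge t\}$ containing $q$. Then $\partial K\subset\Sigma\cap P_t\subset P_t$, the surface $K$ has CAMC $H_0$, and $K$ is a multigraph over $P_t$ because $\Sigma$ is a graph over the parallel plane $P$ (no tangent plane of $\Sigma$ is vertical). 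Granting that $K$ is compact, Lemma~\ref{compactas} applies to $K$ and gives $\operatorname{dist}(x,P_t)\le C(H_0)$ for every $x\in K$; in particular $z(q)-t=\operatorname{dist}(q,P_t)\le C(H_0)$, so $z(q)\le t+C(H_0)<d_0/|H_0|+1+C(H_0)=D(H_0)$, contradicting the choice of $q$. Hence $z\le D(H_0)$ on $\Sigma$, which is the assertion.

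\textbf{Main obstacle.} The one point needing care is the compactness of the top piece $K$. Since $\Sigma$ is a graph over $\Omega$, $K$ is a graph over the connected component $\Omega'\subset\Omega$ of $\{u\ge t\}$ that contains $\pi(q)$; as $u=0<t$ on $\partial\Omega$, the set $\{u\ge t\}$ is closed in $\R^2$ and its topological boundary lies in $\{u=t\}$, which by Lemma~\ref{meeks} is a locally finite union of pairwise disjoint compact Jordan curves of diameter $\le d_0/|H_0|$. One then checks, using the Jordan curve theorem together with this local finiteness, that $\Omega'$ must be bounded — a connected piece of $\{u\ge t\}$ cannot be the unbounded complementary region of such a family of curves — and hence, being closed and bounded, compact; consequently $K$ is compact. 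The remaining verifications (that $K$ inherits the CAMC value and the multigraph property, the existence of the regular value $t$, and the bookkeeping of constants) are routine.
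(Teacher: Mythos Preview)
Your argument is exactly what the paper has in mind; the authors simply record Theorem~\ref{meeks2} as a ``direct consequence'' of Lemmas~\ref{compactas} and~\ref{meeks}, and you have written out that deduction in detail, including the normalization, the choice of a regular slice just above $d_0/|H_0|$, and the application of Lemma~\ref{compactas} to the top piece $K$.

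The only place that needs tightening is the compactness of $K$. The Jordan curve theorem together with local finiteness of the level set $\{u=t\}$ does not by itself force a component $\Omega'$ of $\{u\ge t\}$ to be bounded: a priori $\Omega'$ could sit in the unbounded complementary region of an infinite, locally finite family of small disjoint Jordan curves, and nothing in your sentence excludes this. The clean fix is to notice that the barrier argument in the \emph{proof} of Lemma~\ref{meeks} uses only a simple arc $\Gamma\subset\Omega$ with $u\ge t$ along $\Gamma$ and with endpoints at Euclidean distance greater than $d_0/|H_0|$; hence that proof actually bounds the diameter of every connected component of $\{u\ge t\}$ (not merely of $\{u=t\}$) by $d_0/|H_0|$. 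With this slightly stronger reading of Lemma~\ref{meeks}, $\Omega'$ is automatically bounded, $K$ is compact, and your application of Lemma~\ref{compactas} finishes the proof exactly as you wrote.
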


Let  $s:\r^3\fl\r^3$ denote a symmetry with respect to some plane $P_0$. If the Wulff shape $\cW$ is symmetric with respect to $P_0$, then any surface $\Sigma$ with anisotropic Gauss map  $\nu:\Sigma\fl{\cal W}$ satisfies that
$$
s(\nu(p))=\hat{\nu}(s(p)),\qquad \forall p\in\Sigma,
$$
where $\hat{\nu}:s(\Sigma)\fl{\cal W}$ denotes the anisotropic Gauss map of the symmetric surface $s(\Sigma)$.
In particular, if $\Sigma$ has CAMC, then $s(\Sigma)$ has the same CAMC, and we can apply the Alexandrov reflection technique with respect to planes parallel to $P_0$. In this way we have:

\begin{corollary}\label{Alexandrov}
Assume that the Wulff shape $\cW$ is symmetric with respect to some plane $P_0$. Then there exists some constant $E(H_0)>0$ such that the following assertion holds:

For any compact embedded surface $\Sigma$ with CAMC $H_0$ and boundary contained in a plane $P$ parallel to $P_0$, it holds that the distance of any point of $\Sigma$ to $P$ is at most $E(H_0)$.
\end{corollary}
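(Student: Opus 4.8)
The strategy is to combine the height estimate of Theorem~\ref{meeks2} with the Alexandrov reflection technique along planes parallel to $P_0$, which is available precisely because $\cW$ is symmetric with respect to $P_0$, as recalled in the paragraph preceding the statement. Choose Euclidean coordinates $(x,y,z)$ so that $P=\{z=0\}$ and $P_0$ (hence also $P$) is horizontal; write $P_t=\{z=t\}$ and let $s_t\colon\R^3\to\R^3$ be the reflection across $P_t$. Since $s_t$ is $s_0$ followed by a vertical translation, and $\cW$ is symmetric with respect to $P_0$, each $s_t$ maps a surface of CAMC $H_0$ to a surface of CAMC $H_0$ (with the appropriate orientation). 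Set $E(H_0):=\max\{D(H_0),\,2d_{\cW}/|H_0|\}$, where $D(H_0)$ is the constant of Theorem~\ref{meeks2}, and assume $\Sigma$ is connected (the argument below is insensitive to this). If $\partial\Sigma=\emptyset$, then $\Sigma$ is a closed embedded surface of CAMC $H_0$, hence a homothety of $\cW$ by the anisotropic Alexandrov theorem \cite{CCC}; such a surface has extrinsic diameter $2d_{\cW}/|H_0|$, so the conclusion holds. Thus from now on $\emptyset\ne\partial\Sigma\subset P$, and it suffices to prove $\max_\Sigma z\le D(H_0)$ and $-\min_\Sigma z\le D(H_0)$ (the inequality being trivial for either side if the corresponding extremum is $\le 0$).

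To bound $\max_\Sigma z$, apply the moving plane method from above with the planes $P_t$, $t$ decreasing from $+\infty$. In the standard way, for $t$ close enough to $\max_\Sigma z$ each cap $\Sigma\cap\{z\ge t\}$ is a vertical graph, and its reflection $s_t(\Sigma\cap\{z\ge t\})$ lies in $\{z\le t\}$ and is disjoint from $\Sigma\cap\{z<t\}$. Let $t_0\ge 0$ be the smallest value of $t$ down to which this configuration persists. If $t_0>0$, one has the usual dichotomy at $t_0$: either $s_{t_0}(\Sigma\cap\{z\ge t_0\})$ has an interior contact point with $\Sigma\cap\{z\le t_0\}$, or $\Sigma$ is orthogonal to $P_{t_0}$ at some point of $\Sigma\cap P_{t_0}$. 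In the first case the interior maximum principle for the CAMC equation (used repeatedly in this paper) forces $s_{t_0}(\Sigma\cap\{z\ge t_0\})$ and $\Sigma\cap\{z\le t_0\}$ to agree near the contact point, hence, by unique continuation, $\Sigma$ to be symmetric with respect to $P_{t_0}$; in the second case the same conclusion follows from the Hopf boundary maximum principle. Either way $\partial\Sigma$ would be symmetric with respect to $P_{t_0}$, which is impossible since $\emptyset\ne\partial\Sigma\subset\{z=0\}$ and $t_0>0$. Therefore $t_0=0$, i.e., for every $t>0$ the cap $\Sigma\cap\{z\ge t\}$ is an embedded vertical graph over a domain of $P$.

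Now pick a regular value $\varepsilon>0$ of $z|_\Sigma$ with $\varepsilon<\max_\Sigma z$; almost every small $\varepsilon$ works, by Sard's theorem. Then $\Sigma_\varepsilon:=\Sigma\cap\{z\ge\varepsilon\}$ is a compact surface with smooth boundary $\Sigma\cap P_\varepsilon$, and by the previous paragraph it is a normal graph over a compact domain $\Omega_\varepsilon\subset P_\varepsilon$ of the function $z-\varepsilon\ge 0$, which vanishes exactly along $\partial\Omega_\varepsilon$ (an interior zero of $z-\varepsilon$ would give a horizontal tangent plane at a point of $\Sigma\cap P_\varepsilon$, contradicting that $\varepsilon$ is a regular value). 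Applying Theorem~\ref{meeks2} with $P_\varepsilon$ in the role of $P$, we get $z-\varepsilon\le D(H_0)$ on $\Sigma_\varepsilon$, that is, $\max_\Sigma z\le D(H_0)+\varepsilon$; letting $\varepsilon\to 0^+$ gives $\max_\Sigma z\le D(H_0)$. Moving the planes $P_t$ upward from $-\infty$ instead yields in the same way $-\min_\Sigma z\le D(H_0)$. Hence every point of $\Sigma$ lies within distance $D(H_0)\le E(H_0)$ of $P$, which proves the corollary.

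The main obstacle is to run the moving plane method cleanly down to the plane $P$: one must start the process correctly (this may require treating several points realizing $\max_\Sigma z$, or a small preliminary perturbation), verify that the stopping dichotomy at $t_0$ is exactly interior tangency versus orthogonality to $P_{t_0}$ along $\Sigma\cap P_{t_0}$, and, crucially, translate ``the process descends below every positive level'' into the assertion that each cap $\Sigma\cap\{z\ge t\}$ is an embedded vertical graph over $P$ — which is what is fed into Theorem~\ref{meeks2}. All of this is by now routine for CAMC surfaces, since the reflected surface carries the same CAMC and the interior and boundary maximum principles employed throughout this paper apply verbatim; a detailed treatment in the CMC case can be found in \cite{Me}, \cite{AEG} and \cite{Espinar}.
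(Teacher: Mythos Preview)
Your overall strategy coincides with the paper's: run Alexandrov reflection with respect to planes parallel to $P_0$ to produce a graphical cap, then feed that cap into a height estimate. There is, however, a genuine gap in your moving–plane step. When $\partial\Sigma\ne\emptyset$, the stopping dichotomy at $t_0>0$ is \emph{not} exhausted by ``interior tangency'' and ``orthogonality along $\Sigma\cap P_{t_0}$'': there is a third possibility, namely that the reflected cap $s_{t_0}(\Sigma\cap\{z\ge t_0\})$ first touches $\Sigma\cap\{z\le t_0\}$ at a point $q\in\partial\Sigma\subset\{z=0\}$. At such $q$ the reflected cap is smooth and two–sided (its boundary lies in $P_{t_0}$, and $t_0>0$), but $\Sigma$ is only a half–disk, so neither the interior maximum principle nor the Hopf boundary lemma at $P_{t_0}$ applies directly; one would need a separate Serrin–type corner argument that you do not supply. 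Consequently you have not justified that the process descends to $t_0=0$, and hence not that $\Sigma\cap\{z\ge\varepsilon\}$ is a graph for every small $\varepsilon>0$.

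The paper sidesteps this entirely, and the fix is a simplification rather than an addition. Let $h=\max_\Sigma z$ and run the moving planes from above only down to $t=h/2$. For every $t>h/2$ the reflected cap lies in $\{z\ge 2t-h>0\}$ and therefore cannot meet $\partial\Sigma\subset\{z=0\}$; your dichotomy is then complete, and one concludes that $\Sigma\cap\{z\ge h/2\}$ is a vertical graph. Now apply Lemma~\ref{compactas} (rather than Theorem~\ref{meeks2}) to this compact graph with boundary in $P_{h/2}$: it gives $h/2\le C(H_0)$, whence $h\le 2C(H_0)$. Doing the same from below yields the bound on $-\min_\Sigma z$, and one may take $E(H_0)=2C(H_0)$. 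This is exactly the paper's argument; note that Lemma~\ref{compactas} already covers compact multigraphs, so Theorem~\ref{meeks2} (designed for possibly unbounded domains) is unnecessary here.
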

\begin{proof}
This is a direct consequence of the Alexandrov reflection principle. Indeed, by using such argument it follows that if $h$ denotes the maximum distance of a point of $\Sigma$ to $P$, then the set of points of $\Sigma$ that are at a distance at least $h/2$ from $P$ has to be a graph with respect to the direction of $\R^3$ orthogonal to $P$. So, the result follows from Lemma \ref{compactas}, taking $E(H_0)=2C(H_0)$.
\end{proof}

The next result was proved by Meeks \cite{Me} for CMC surfaces, i.e., for the case where the Wulff shape $\cW$ is the round sphere. The proof of Theorem \ref{separacion} is the same as Meeks', bearing in mind that $\cW$ is contained in a closed ball of diameter $\sqrt{3}d_{\cal W}$, and taking into account the behavior of the anisotropic mean curvature with respect to ambient homotheties explained in Section \ref{sec:2}

\begin{theorem}[Meeks' separation lemma]\label{separacion}
Let $\Sigma$ be a properly embedded surface in $\R^3$ with CAMC $H_0\neq 0$, and diffeomorphic to a closed disk minus an interior point (in particular, $\parc \Sigma$ is non-empty and compact). 

Let $P_1$, $P_2$ be two parallel planes separated by a distance greater than 
\begin{equation}\label{d0}d_0:=\frac{2\sqrt{3}d_W}{|H_0|},
\end{equation}
where $d_{\cal W}$ denotes the diameter of the Wulff shape. Let $P_1^+$, $P_2^+$ denote the two connected components of the complement in $\r^3$ of the open slab between $P_1$ and $P_2$. Then, all connected components of either $\Sigma\cap P_1^+$ or $\Sigma\cap P_2^+$ are compact.
\end{theorem}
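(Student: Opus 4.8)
The plan is to follow Meeks' original argument for CMC surfaces, replacing the sphere-based estimates by the Wulff-shape-based estimates already established in this section. Up to an ambient homothety, I would normalize $H_0 = -2$, so that $d_0 = \sqrt{3}\,d_{\cal W}$ is precisely the diameter of a closed ball containing $\cW$, and so that the Wulff shape has CAMC $-2$ for its exterior unit normal. Since $\Sigma$ is properly embedded in $\R^3$ and diffeomorphic to a once-punctured closed disk, it has exactly one (annular) end, and a neighborhood of that end is contained in one of the two closed half-spaces $P_1^+$, $P_2^+$; say the end lies in $P_2^+$. The goal is then to show that all connected components of $\Sigma \cap P_1^+$ are compact.

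The key step is the following: I would argue by contradiction, assuming some connected component $\Sigma'$ of $\Sigma \cap P_1^+$ is non-compact. Since the end of $\Sigma$ lies in $P_2^+$ and the two planes are separated by a distance larger than $d_0$, the piece $\Sigma'$ must be a non-compact subset of $\Sigma$ disjoint from the end, hence properly embedded, with boundary on $P_1$ and diverging to infinity inside the half-space $P_1^+$. Because $\Sigma$ is properly embedded and $\Sigma'$ avoids the end, $\Sigma'$ is disjoint from a neighborhood of the end; thus $\Sigma'$ sits in the portion of $\R^3$ that is, roughly speaking, ``shielded'' from infinity. The heart of Meeks' argument is then to translate a copy of the Wulff shape $\cW$ (of CAMC $-2$) from far away, coming in from infinity along a direction roughly orthogonal to $P_1$ and parallel to the planes, and to slide it toward $\Sigma'$. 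Because $\cW$ is contained in a ball of diameter $\sqrt{3}\,d_{\cal W} = d_0$ and the slab between $P_1$ and $P_2$ has width greater than $d_0$, such a translated copy can be maneuvered within the slab and the adjacent half-space without ever being blocked by the compact boundary $\partial\Sigma$ or by the end. If $\Sigma'$ were non-compact and reached arbitrarily far into $P_1^+$, this sliding copy of $\cW$ would have to achieve a first interior contact point with $\Sigma'$ at some stage of the motion; the maximum principle for the CAMC equation then forces $\cW = \Sigma$, which is absurd since $\Sigma$ is non-compact. This contradiction shows $\Sigma' $ cannot be non-compact.

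More precisely, to make the sliding argument airtight I would use Lemma \ref{meeks} and Theorem \ref{meeks2} as follows. A non-compact component $\Sigma'$ of $\Sigma \cap P_1^+$ that is a multigraph over $P_1$ near each slice would, by Lemma \ref{meeks} applied to its intersections with planes parallel to $P_1$, have all such slices compact of extrinsic diameter at most $d_0/|H_0|$ at heights above $d_0/|H_0|$; combined with the properness of $\Sigma$ and the one-endedness, this traps $\Sigma'$ in a region where a copy of $\cW$ can be pushed across it, producing the forbidden interior contact. The step requiring the most care — and the main obstacle — is verifying that the translated copy of $\cW$ used in the sliding argument genuinely clears the boundary $\partial\Sigma$ and the annular end without premature contact: this is exactly where the hypothesis that the slab width exceeds $d_0 = 2\sqrt{3}\,d_{\cal W}/|H_0|$ is essential, since $\cW$ fits inside a ball of diameter $d_0$ (after normalization), and one must carefully track the geometry of the complement $\R^3 \setminus (\text{open slab})$ relative to the positions of $\partial\Sigma$ and of the end of $\Sigma$. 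Once this geometric bookkeeping is in place, the conclusion follows verbatim from Meeks' reasoning, using the maximum principle for the quasilinear elliptic CAMC equation \eqref{edp} in place of the CMC equation.
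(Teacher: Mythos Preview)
Your high-level strategy --- follow Meeks' CMC argument verbatim, replacing the round sphere by the Wulff shape $\cW$ and using that $\cW$ sits in a ball of diameter $\sqrt{3}\,d_{\cW}$ together with the homothety behavior of the anisotropic mean curvature --- is exactly what the paper does (the paper gives no independent proof and simply refers to Meeks with these two remarks).

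However, your sketch of Meeks' argument has a genuine gap. You begin by asserting that ``a neighborhood of that end is contained in one of the two closed half-spaces $P_1^+$, $P_2^+$'' and then work under the assumption that the end lies in $P_2^+$. This is neither a hypothesis nor a consequence of proper embeddedness: the annular end may cross the slab infinitely often and need not eventually stay in either half-space. Worse, if your assertion \emph{were} true the rest of the argument would be unnecessary: once a neighborhood of the end lies in $P_2^+$, the set $\Sigma\cap P_1^+$ is a closed subset of the compact piece $\Sigma$ minus that end-neighborhood, hence already has only compact components, and no sliding of $\cW$ is needed. So your setup both assumes something unjustified and, under that assumption, trivializes the result.

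Meeks' actual route is different: one supposes for contradiction that \emph{both} $\Sigma\cap P_1^+$ and $\Sigma\cap P_2^+$ have a non-compact component. Since $\Sigma$ is properly embedded with compact boundary, after capping off $\partial\Sigma$ one obtains a separating surface in $\R^3$, and the sign of the CAMC selects a mean-convex side. The two non-compact pieces, one in each half-space, are what permit a translate of $\cW$ to be brought in from infinity on the mean-convex side and moved through the slab (this is where the width $>d_0$ is used, so that $\cW$ fits), producing an interior first contact and a contradiction with the maximum principle. Your appeal to Lemma~\ref{meeks} is also out of place: that lemma concerns \emph{graphs} over a plane, and nothing in the hypotheses of Theorem~\ref{separacion} makes the putative non-compact component $\Sigma'$ a graph.
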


A geometric consequence of Corollary \ref{Alexandrov} and Theorem \ref{separacion} is: 
\begin{proposition}\label{lemi}
Assume that the Wulff shape ${\cal W}$ is symmetric with respect to some plane $P_0$, and let $\Sigma$ be a properly embedded surface in $\R^3$ with CAMC $H_0\neq 0$ that is diffeomorphic to a closed disk minus an interior point. Then, $\Sigma$ lies entirely in a half-space of $\R^3$ whose boundary is a plane $P$ parallel to $P_0$.
\end{proposition}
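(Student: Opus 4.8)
The plan is to combine Meeks' separation lemma (Theorem~\ref{separacion}) with the height estimate of Corollary~\ref{Alexandrov}. First I would choose Euclidean coordinates $(x,y,z)$ in $\R^3$ so that $P_0=\{z=0\}$; then the planes parallel to $P_0$ are exactly the horizontal planes $P_t=\{z=t\}$, $t\in\R$. Since $\Sigma$ is diffeomorphic to a closed disk minus an interior point, its boundary $\parc\Sigma$ is a compact curve, hence $\parc\Sigma\subset\{|z|<a_0\}$ for some $a_0>0$.

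Next I would fix a value $a>\max\{a_0,\,d_0/2\}$, where $d_0$ is the constant of \eqref{d0}, chosen (using Sard's theorem) so that both $a$ and $-a$ are regular values of the height function $z|_\Sigma$. Applying Theorem~\ref{separacion} to the two parallel planes $P_{-a}$ and $P_a$, which are separated by a distance $2a>d_0$, I obtain that either all connected components of $\Sigma\cap\{z\le -a\}$ are compact, or all connected components of $\Sigma\cap\{z\ge a\}$ are compact. Up to replacing $z$ by $-z$, assume the latter.

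Then, for an arbitrary connected component $\Sigma'$ of $\Sigma\cap\{z\ge a\}$, I would note that $\Sigma'$ is a compact embedded surface with CAMC $H_0$, and that its boundary $\parc\Sigma'$ is contained in the plane $P_a$: indeed $\parc\Sigma\subset\{|z|<a\}$ is disjoint from $\{z\ge a\}$, so $\Sigma\cap\{z\ge a\}$ lies in the interior of $\Sigma$, and since $a$ is a regular value of $z|_\Sigma$ the set $\Sigma'\cap\{z=a\}$ is a smooth $1$-submanifold of $P_a$, which is parallel to $P_0$. Corollary~\ref{Alexandrov} then bounds the distance from any point of $\Sigma'$ to $P_a$ by $E(H_0)$; as $\Sigma'\subset\{z\ge a\}$, this forces $\Sigma'\subset\{a\le z\le a+E(H_0)\}$. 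Since this bound is uniform over all components, $\Sigma\cap\{z\ge a\}\subset\{a\le z\le a+E(H_0)\}$, whence $\Sigma$ is contained in the half-space $\{z\le a+E(H_0)\}$, whose boundary plane is parallel to $P_0$; in the other case one gets $\Sigma\subset\{z\ge -a-E(H_0)\}$ instead, which is again the desired conclusion.

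I do not expect a genuine obstacle here, since the two quoted results do essentially all the work. The only points requiring care are to take the two cutting planes far enough apart (distance strictly greater than $d_0$) so that Theorem~\ref{separacion} applies, while keeping the compact set $\parc\Sigma$ strictly between them, and to choose the cutting heights as regular values of $z|_\Sigma$ so that each piece $\Sigma'$ is a genuine compact surface whose boundary lies in a plane parallel to $P_0$ --- precisely the hypothesis of Corollary~\ref{Alexandrov}. It is worth stressing that $\Sigma\cap\{z\ge a\}$ need not have finitely many connected components, but this causes no difficulty because the estimate $E(H_0)$ applies to each component uniformly.
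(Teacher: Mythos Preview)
Your proposal is correct and follows essentially the same approach as the paper's proof: apply Meeks' separation lemma (Theorem~\ref{separacion}) to two parallel planes far enough apart and containing $\parc\Sigma$ between them, then use Corollary~\ref{Alexandrov} on each compact piece to obtain a uniform height bound. Your additional care in choosing the cutting height $a$ to be a regular value of $z|_\Sigma$ via Sard's theorem is a detail the paper omits but which makes the application of Corollary~\ref{Alexandrov} cleaner.
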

\begin{proof}
Let $P_1$, $P_2$ be two planes parallel to $P_0$, separated by a distance greater than the constant $d_0$ in \eqref{d0}, and so that $\parc \Sigma$ lies in the open slab determined by them. Then, Theorem \ref{separacion} shows that, up to a relabelling of the planes $P_i$, all connected components of $\Sigma\cap P_2^+$ are compact (there might be an infinite number of such components). Once there, Corollary \ref{Alexandrov} shows that any such connected component lies at a distance at most $E=E(H_0)$ from $P_2$. This proves Lemma \ref{lemi}, taking $P$ parallel to $P_2$ and contained in $P_2^+$, at a distance $E(H_0)$ from $P_2$. 
\end{proof}

Proposition \ref{lemi} has a stronger form in the case that $\parc\Sigma =\emptyset$. Recall that a surface $\Sigma$ is said to have \emph{finite topology} if it is diffeomorphic to a compact surface (without boundary) $\overline{\Sigma}$ with a finite number of points removed, $e_1,\ldots,e_m\in\overline{\Sigma}$. The points $e_i$ will be called the \emph{ends} of the surface $\Sigma$. In this way, we have:
\begin{theorem}\label{banda}
Assume that the Wulff shape ${\cal W}$ is symmetric with respect to some plane $P_0$. Then, there exists a constant $G(H_0)>0$ such that if $\Sigma$ is a properly embedded surface in $\R^3$ with CAMC $H_0\neq 0$, finite topology and only one end, then $\Sigma$ lies in an open slab of $\R^3$ of width at most $G(H_0)$, and whose boundary is the union of two planes parallel to $P_0$.
\end{theorem}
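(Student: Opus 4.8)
The strategy is to reduce to the annular end of $\Sigma$, obtain a one--sided bound from the separation lemma together with the Alexandrov--type height estimate, and then exploit the hypotheses $\partial\Sigma=\emptyset$ and ``exactly one end'' to upgrade this to a two--sided bound whose width depends only on $H_0$.

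Since $\Sigma$ has finite topology and one end, there is a compact set $K\subset\Sigma$ such that $A:=\overline{\Sigma\setminus K}$ is properly embedded and diffeomorphic to a closed disk minus an interior point, with $\partial A$ a single embedded compact curve. Fix Euclidean coordinates with $P_0=\{z=0\}$ and $N_0=(0,0,1)$. Proposition \ref{lemi} applies to $A$ (its proof being Theorem \ref{separacion} combined with Corollary \ref{Alexandrov}), so $A$, and hence $\Sigma=K\cup A$, lies in a half--space bounded by a plane parallel to $P_0$. As the reflection $z\mapsto -z$ is a symmetry of $\cW$ (because $\cW$ is symmetric with respect to $P_0$) and therefore preserves the CAMC condition, we may assume after such a reflection and a vertical translation that $\Sigma\subset\{z\le M\}$ for some $M\in\R$ and $K\cup\partial A\subset\{z\ge 1\}$.

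The key step is to show that $z$ is bounded below on $\Sigma$, with a bound controlled only by $H_0$. I would argue by contradiction: if $z$ is unbounded below on $\Sigma$, it is so on $A$, which then meets every level $\{z=t\}$. For $c>0$ large I apply Theorem \ref{separacion} to $A$ with the planes $\{z=-c-2d_0\}$ and $\{z=-c\}$, where $d_0=d_0(H_0)$ is as in \eqref{d0} and the compact core lies above both planes. Of the two alternatives of the separation lemma, the one asserting that all components of $A\cap\{z\le -c-2d_0\}$ are compact immediately yields a contradiction: each such component is a compact embedded CAMC surface with boundary contained in the plane $\{z=-c-2d_0\}$ (parallel to $P_0$ and disjoint from the core), hence by Corollary \ref{Alexandrov} it lies within $E(H_0)$ of that plane, so $\inf_A z\ge -c-2d_0-E(H_0)>-\infty$. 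The difficulty is to rule out the other alternative, namely that every component of $A\cap\{z\ge -c\}$ is compact even though $A$ is unbounded below; this is precisely where one must use, beyond what Proposition \ref{lemi} already gives, that $\Sigma$ is boundaryless and one--ended. Intuitively, a single end cannot ``escape downwards to $z=-\infty$'' — possibly oscillating, possibly asymptotic to a CAMC cylinder running downward — without contradicting either the separation lemma applied to the descending portion of $A$, or, through the cross--section estimate of Lemma \ref{meeks} together with the fact that $\R^3\setminus\Sigma$ has exactly two components, the maximum principle against a continuous family of translated Wulff shapes of CAMC $H_0$ moving vertically downward. I expect turning this picture into a rigorous argument to be the main obstacle.

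Once $\Sigma$ is trapped in a slab $\{m\le z\le M\}$ with faces parallel to $P_0$, I would obtain the uniform width $G(H_0)$ by Alexandrov reflection across horizontal planes descending from $z=M$. Since $\Sigma$ is properly embedded, boundaryless and non--compact, a reflected copy can never have a first interior contact with $\Sigma$ (that would force $\Sigma$ to be compact and symmetric), and the horizontal non--compactness that could a priori spoil the reflection argument is controlled by Lemma \ref{meeks}, whose cross--section bound prevents $\Sigma$ from running off horizontally at heights more than $d_0$ below $z=M$. Hence the portion of $\Sigma$ above the midlevel is a graph over a planar domain with planar boundary, so Theorem \ref{meeks2} (equivalently Corollary \ref{Alexandrov}) bounds its distance to $\{z=M\}$ by a constant depending only on $H_0$; the symmetric argument from below gives $M-m\le G(H_0)$, with $G(H_0)$ an explicit combination of $d_0(H_0)$ and the constants of Corollary \ref{Alexandrov} and Theorem \ref{meeks2}. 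This completes the proof.
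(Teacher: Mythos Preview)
Your proposal has a real gap --- the one you yourself flag as ``the main obstacle'' --- but the missing observation is much simpler than the Wulff-shape/maximum-principle picture you sketch, and in fact it is precisely the paper's key step. The trick is to use Corollary~\ref{Alexandrov} a \emph{second} time, now to rule out the other alternative of the separation lemma. In your setup you have arranged $K\cup\partial A\subset\{z\ge 1\}$. If every component of $A\cap\{z\ge -c\}$ were compact, then the component $C$ containing $\partial A$ is compact, and $C\cup K$ is a compact embedded CAMC piece of $\Sigma$ whose entire boundary lies in the plane $\{z=-c\}$. Corollary~\ref{Alexandrov} then forces $C\cup K$ to lie within distance $E(H_0)$ of $\{z=-c\}$; but $K$ contains points with $z\ge 1$, so $1+c\le E(H_0)$. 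Hence for any $c>E(H_0)-1$ this alternative is impossible, and the remaining alternative gives $\inf_\Sigma z\ge -c-2d_0-E(H_0)$, a bound depending only on $H_0$.

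This is exactly how the paper proceeds, and more directly. It does not first pass through Proposition~\ref{lemi} to get a half-space bound and then fight for the other side; it simply fixes a plane $\{z=0\}$ meeting $\Sigma$, takes any $R>E(H_0)$, and applies the separation lemma with $P_1=\{z=R\}$, $P_2=\{z=R+d_0\}$. The existence of a point of $\Sigma$ at height $0$ (distance $R>E(H_0)$ from $P_1$) rules out, via Corollary~\ref{Alexandrov}, the alternative ``all components of $\Sigma\cap\{z\le R\}$ are compact''. So all components of $\Sigma\cap\{z\ge R+d_0\}$ are compact, and the height estimate gives $\Sigma\subset\{z<2R+d_0\}$. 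The identical argument with $-R$ and $-R-d_0$ gives the lower bound, so $G(H_0)=2(2R+d_0)$ works. Your step~3 (running Alexandrov reflection on a non-compact surface to extract a uniform width) then becomes unnecessary --- which is just as well, since the claim there that ``the portion of $\Sigma$ above the midlevel is a graph'' is itself not justified.
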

\begin{proof}
Let $P$ be a plane parallel to $P_0$ that intersects $\Sigma$, and $(x,y,z)$ be Euclidean coordinates in $\R^3$ so that $P$ corresponds to the plane $z=0$.

Take $R>E(H_0)$, where $E(H_0)$ is the constant given by Corollary \ref{Alexandrov}. Then, if we choose $P_1=\{z=R\}$ and $P_2=\{z=R+d_0\}$ (with $d_0$ given by \eqref{d0}), and observe that there exist points of $\Sigma$ at a distance $R>E(H_0)$ from $P_1$, we deduce from Theorem \ref{separacion} and Corollary \ref{Alexandrov} that all connected components of $\Sigma\cap\{z\geq R+d_0\}$ are compact. Thus, $\Sigma\subset\{z<2R+d_0\}$.

Analogously, we can prove $\Sigma\subset\{z>-(2R+d_0)\}$, what completes the proof.
\end{proof}

Theorem \ref{banda} also shows that if the Wulff shape $\cW$ has \emph{two} linearly independent planes of symmetry, then any properly embedded surface $\Sigma$ in the conditions of the theorem must be contained in a solid cylinder of $\R^3$. In the case where $\cW$ has \emph{three} linearly independent planes of symmetry we obtain a specially interesting consequence, which generalizes Meeks' Theorem C in the introduction:

\begin{theorem}\label{final}
Assume that the Wulff shape ${\cal W}$ is symmetric with respect to three planes $P_1,P_2,P_3\subset \R^3$ with linearly independent normal vectors. Then $\cW$ is, up to homothety, the only properly embedded surface in $\R^3$ with non-zero CAMC, finite topology and at most one end. \end{theorem}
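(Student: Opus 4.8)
The plan is to deduce Theorem \ref{final} from the structural results already established in this section, reducing the global classification to the already-known compact case. First I would handle the degenerate possibility that $\Sigma$ has no end at all, i.e. that $\Sigma$ is compact without boundary: in that case, since $\cW$ is symmetric with respect to three linearly independent planes, the Alexandrov reflection technique applies in three independent directions (as explained before Corollary \ref{Alexandrov}, the symmetry of $\cW$ is exactly what makes $s(\Sigma)$ a CAMC surface again), and the standard Alexandrov moving-plane argument forces $\Sigma$ to be symmetric with respect to a plane parallel to each $P_i$; combined with the uniqueness results for embedded compact CAMC surfaces (He-Li-Ma-Ge \cite{CCC}), one concludes $\Sigma$ is homothetic to $\cW$. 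This is the easy case and essentially a citation.

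The substantive case is when $\Sigma$ is properly embedded with finite topology and exactly one end. Here I would first apply Theorem \ref{banda}, in all three symmetry directions $P_1,P_2,P_3$: for each $i$, Theorem \ref{banda} shows $\Sigma$ is contained in a slab of bounded width with boundary planes parallel to $P_i$. Since the normals of $P_1,P_2,P_3$ are linearly independent, the intersection of these three slabs is a bounded region of $\R^3$. Hence $\Sigma$ is contained in a bounded set, i.e. $\Sigma$ is bounded. A properly embedded surface that is bounded must be compact (properness means preimages of compact sets are compact, and $\Sigma$ itself is the preimage of its closure, which is compact). But a compact surface has no ends, contradicting that $\Sigma$ has exactly one end. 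Therefore the one-ended case is vacuous, and the only surfaces satisfying the hypotheses are the compact ones treated above, namely $\cW$ up to homothety.

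I would also make sure to cover the case "at most one end" uniformly: if $\Sigma$ has zero ends it is compact (the case above); if it has exactly one end we just derived a contradiction; so in all cases $\Sigma$ is homothetic to $\cW$. The main obstacle, conceptually, is recognizing that Theorem \ref{banda} applied in three independent directions already forces compactness — once that is seen, the argument is short. A minor technical point to verify carefully is that Theorem \ref{banda} genuinely applies with the \emph{same} surface $\Sigma$ for each of the three directions: this is fine because the hypothesis of Theorem \ref{banda} only requires that $\cW$ be symmetric with respect to \emph{one} plane $P_0$, and we have three such planes available, so we invoke it three times, once per $P_i$, obtaining three slabs $\{|\langle x, \nu_i\rangle - c_i|< G_i\}$ whose common intersection is compact since $\{\nu_1,\nu_2,\nu_3\}$ is a basis of $\R^3$. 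From there properness of the embedding upgrades boundedness to compactness, and the classification of compact embedded CAMC surfaces (together with three-directional Alexandrov reflection, or directly \cite{CCC}) finishes the proof.
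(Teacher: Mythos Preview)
Your proposal is correct and follows essentially the same route as the paper: apply Theorem \ref{banda} in each of the three symmetry directions to confine $\Sigma$ to a bounded region, use properness to deduce compactness, and then invoke the Alexandrov-type result of \cite{CCC}. The only difference is organizational---you split explicitly into the zero-end and one-end cases and phrase the latter as a contradiction, whereas the paper handles both at once by simply concluding compactness and citing \cite{CCC}.
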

\begin{proof}
Let $\Sigma$ be a surface in the conditions of the theorem. Then, using Theorem \ref{banda} for each plane $P_i$, $i=1,2,3$, we see that $\Sigma$ lies in a bounded set of $\R^3$. Since $\Sigma$ is proper, it must then be compact. Now, the Alexandrov-type theorem for CAMC surfaces (see \cite{CCC}) proves that $\Sigma$ is, up to homothety, the Wulff shape $\cW$.
\end{proof}

\def\refname{References}

\vskip 0.2cm

\noindent José A. Gálvez

\noindent Departamento de Geometría y Topología,\\ Universidad de Granada (Spain).

\noindent  e-mail: {\tt jagalvez@ugr.es}

\vskip 0.2cm

\noindent Pablo Mira

\noindent Departamento de Matemática Aplicada y Estadística,\\ Universidad Politécnica de Cartagena (Spain).

\noindent  e-mail: {\tt pablo.mira@upct.es}

\vskip 0.4cm

\noindent Research partially supported by MINECO/FEDER Grant no. MTM2016-80313-P 

\noindent Marcos P. Tassi

\noindent Departamento de Matemática,\\ Universidade Federal de Sao Carlos (Brazil).

\noindent  e-mail: {\tt mtassi@dm.ufscar.br}

\end{document}